
\documentclass[letterpaper, 11pt]{article}

\usepackage{etoolbox}
\newtoggle{SODA} 
\toggletrue{SODA}
\togglefalse{SODA}

\usepackage[margin=1in]{geometry}
\usepackage{amsthm}

\usepackage{dsfont}     
\usepackage{amssymb}     
\usepackage{bbm}        
\usepackage{bm}
\usepackage[scr]{rsfso} 	
\DeclareMathAlphabet{\mathbmscr}{OMS}{mdugm}{b}{n} 

\usepackage{mathtools}

\usepackage{algorithm}
\usepackage{algcompatible}
\usepackage{algpseudocode}

\usepackage{amsmath}

\usepackage{nomencl}
\makenomenclature


\usepackage{enumerate}

\usepackage{paralist}

\usepackage{amsfonts}
\usepackage{amscd} 

\usepackage{xcolor}

\usepackage{tikz}
\usetikzlibrary{arrows,automata} 
\usepackage{istgame} 

\tikzstyle{rect} = [rectangle, rounded corners, minimum width=1cm, minimum height=1cm,text centered, draw=black ]
\tikzstyle{arrow} = [thick,->,>=stealth]
\usetikzlibrary{decorations.markings,arrows.meta,bending}
\tikzset{->-/.style={decoration={markings, mark=at position #1 with {\arrow[line width=2pt]{>}}},postaction={decorate}}}

\usepackage{subcaption}

\usepackage{pdfpages} 

\usepackage{hyperref} 

\newcommand{\EE}{\mathbb{E}}

\newcommand{\PP}{\mathbb{P}}
\newcommand{\RR}{\mathbb{R}}
\newcommand{\NN}{\mathbb{N}}
\newcommand{\1}{\mathds{1}}

\newcommand{\defas}{\coloneqq}
\newcommand{\safed}{\eqqcolon}

\newcommand{\until}{\, .. \,}




\usepackage[normalem]{ulem} 

\usepackage{multirow} 
\usepackage{hhline} 

\usepackage{thmtools, thm-restate}

\newif\ifhideproofs
\ifhideproofs
  \usepackage{environ}
  \NewEnviron{hide}{}

\fi

\usepackage{todonotes} 

\usepackage{cleveref} 

\theoremstyle{plain} 
\newtheorem{@theorem}{Theorem}[section]
\newenvironment{theorem}{\begin{@theorem}}{\end{@theorem}}

\newtheorem{lemma}[@theorem]{Lemma}

\theoremstyle{definition} 

\theoremstyle{remark} 
\newtheorem{Remark}[@theorem]{Remark}

\begin{document}

\title{
	Repeated Prophet Inequality with Near-optimal Bounds\iftoggle{SODA}{\thanks{Full version with detailed proofs attached as Appendix.}}
}

\author{
	Krishnendu Chatterjee\thanks{Institute of Science and Technology Austria, Klosterneuburg, Austria.}
	\and
	Mona Mohammadi\thanks{Institute of Science and Technology Austria, Klosterneuburg, Austria.}
	\and
	Raimundo Saona\thanks{Institute of Science and Technology Austria, Klosterneuburg, Austria.}
}

\date{\today}

\maketitle
\thispagestyle{empty}

\newcommand{\ALG}{\text{ALG}}
\newcommand{\SEC}{\text{SEC}}
\newcommand{\WAI}{\text{WAI}}
\newcommand{\RPI}{\text{RPI}}
\newcommand{\TPS}{\text{TPS}}
\newcommand{\SOP}{\text{SOP}}
\newcommand{\sALG}{\textbf{ALG}}
\newcommand{\sSEC}{\textbf{SEC}}
\newcommand{\sWAI}{\textbf{WAI}}
\newcommand{\sRPI}{\textbf{RPI}}
\newcommand{\sTPS}{\textbf{TPS}}
\newcommand{\sSOP}{\textbf{SOP}}
\newcommand{\perf}{\text{perf}}

\begin{abstract}
    In modern sample-driven Prophet Inequality, an adversary chooses a sequence of $n$ items with values $v_1, v_2,  \ldots, v_n$ to be presented to a decision maker (DM).
    The process follows in two phases. In the first phase (sampling phase), some items, possibly selected at random, are revealed to the DM, but she can never accept them. In the second phase,
    the DM is presented with the other items in a random order and online fashion. 
    For each item, she must make an irrevocable decision to either accept the item and stop the process 
    or reject the item forever and proceed to the next item. 
    The goal of the DM is to maximize the expected value as compared to a Prophet (or offline algorithm) that has access to
    all information.
    In this setting, the sampling phase has no cost and is not part of the optimization process. However, in many scenarios, the samples are obtained as part of the decision-making 
    process.

    We model this aspect as a two-phase Prophet Inequality where an adversary chooses a sequence of $2n$ items with values $v_1, v_2, \ldots, v_{2n}$ and
    the items are randomly ordered. Finally, there are two phases of the Prophet Inequality problem with the first $n$-items and the rest of the items, respectively. 
    We show that some basic algorithms achieve a ratio of at most $0.450$. 
    We present an algorithm that achieves a ratio of at least $0.495$. 
    Finally, we show that for every algorithm the ratio it can achieve is at most $0.502$.
    Hence our algorithm is near-optimal.
\end{abstract}

\smallskip
\noindent\emph{Keywords:} Online algorithms; Prophet Inequality; Multi-choice.


\newpage
\pagenumbering{arabic}

\iftoggle{SODA}
{
    \vspace{-1em}
    \section{Introduction} 
    \label{Section: Introduction}
    \vspace{-0.5em}
}{
    \section{Introduction} 
    \label{Section: Introduction}
}

\smallskip\noindent{\em Online decision-making models.}
In computer science, online decision-making is a fundamental problem, and the two classical models are {\em competitive analysis} and
{\em optimal stopping}~\cite{borodin2005OnlineComputationCompetitive, shiryaev2008OptimalStoppingRules}.
In competitive analysis, the input is generated by an adversary for the worst-case analysis.
In contrast, in classical approaches for optimal stopping full knowledge about the distribution of the input is assumed.
The classical model for optimal stopping is as follows: $n$ items with values $v_1, v_2, \ldots, v_n$ are generated from 
a known distribution, and presented to the decision maker (DM) in an online fashion. 
For each item, the DM makes an irrevocable decision either to accept the item and stop, or reject the item and can never go back to the rejected item.
The objective is to maximize the expected payoff as compared to a Prophet who knows the values in advance (i.e., an offline algorithm). 
This is referred to as the i.i.d. Prophet Inequality~\cite{kertz1986StopRuleSupremum, correa2017PostedPriceMechanisms}, and if each item value is obtained from a different distribution,
then it is general Prophet Inequality~\cite{gilbert1966RecognizingMaximumSequence, krengel1977SemiamartsFiniteValues, krengel1978SemiamartsAmartsProcesses}.
If the goal is to maximize the probability to obtain the best item, then the problem is called Secretary Problem~\cite{ferguson1989WhoSolvedSecretary} (although in this case no knowledge of the distribution is usually assumed).
In general Prophet Inequality, the distribution for each item is different yet known, but there are three possibilities for the order
of presentation of items: (a)~adversarial (worst-case order); (b)~free (best-case order chosen by the DM); and (c)~random. 
For Prophet Inequality, 
(a)~if the order of the presentation is adversarial, then the optimal ratio is~$1/2$, established in the celebrated 
result~\cite{krengel1977SemiamartsFiniteValues, krengel1978SemiamartsAmartsProcesses};
(b)~if the order of presentation is free, then the optimal ratio is at least $0.725$~\cite{peng2022OrderSelectionProphet}; and
(c)~if the order of presentation is random, then the optimal ratio is at least $0.699$~\cite{correa2021ProphetSecretaryBlind}. 
Lastly, if the distribution for each item is unknown, we have the following results:
(a)~if the order of the presentation is adversarial, then the optimal ratio is~$1/n$, established also in the celebrated result~\cite{krengel1977SemiamartsFiniteValues, krengel1978SemiamartsAmartsProcesses};
(b)~if the order of presentation is free, then the optimal ratio is the same as that under a random order since there is no prior information; and
(c)~if the order of presentation is random, then the optimal ratio is $1/e \approx 0.367$~\cite{gilbert1966RecognizingMaximumSequence} as the problem is equivalent to the Secretary Problem. 

\smallskip\noindent{\em Motivation.} 
The assumption of worst-case inputs generated by an adversary in competitive analysis is quite strong in many real-world problems, such as,
in e-commerce platforms and online auctions, where the input is not generated adversarially. 
Here the optimal stopping problems are more suited.
Hence the study of optimal stopping problems has been an active research area, both from a theoretical perspective~\cite{hill1992SurveyProphetInequalities}
as well as practical perspective~\cite{einav2018AuctionsPostedPrices, surowiecki2011GoingGoingGone}. 
Moreover, the Prophet Inequality is closely related to Posted-Price Mechanisms (PPMs)~\cite{hajiaghayi2007AutomatedOnlineMechanism, chawla2010MultiparameterMechanismDesign, correa2019PricingProphetsBack}.
PPMs are an attractive alternative to implementing auctions and are usually used in online sales~\cite{chawla2010MultiparameterMechanismDesign, einav2018AuctionsPostedPrices}. 
Since they are sub-optimal, it is important to know the ratio between PPMs and the optimal auction (Myerson's auction). 
This ratio can be studied through Prophet Inequality~\cite{hajiaghayi2007AutomatedOnlineMechanism, chawla2010MultiparameterMechanismDesign, correa2019PricingProphetsBack}.

\smallskip\noindent{\em New models for Prophet Inequality.}
In contrast to the strong pessimistic assumption of adversarial inputs of competitive analysis, the classical optimal stopping models 
consider a strong optimistic assumption of full-distributional knowledge.
This strong assumption often does not hold in real-world scenarios, and the seminal work of Azar et. al.~\cite{azar2014ProphetInequalitiesLimited}
introduces the model of optimal stopping with limited information on the distribution.
The main idea is as follows. First, an adversary chooses $n$ items with values $v_1, v_2, \ldots, v_n$.
Then, the procedure has two phases:
(1)~{\em Phase~1: The sampling phase.} The DM is presented with a randomly chosen number of items, but she cannot accept them. 
(2)~{\em Phase~2: The optimal stopping phase.} The rest of the items are presented in a uniformely random order 
in an online fashion as the classical Prophet Inequality.
This model captures that the knowledge of the distributions is partially known and obtained by access to samples
or historical data. 
This line of research is an active topic with several follow-up works, for example, 
Correa et. al.~\cite{correa2020SampledrivenOptimalStopping} establishes the optimal ratio for a random number 
of sample values; and Kaplan et. al.~\cite{kaplan2019CompetitiveAnalysisSample} presents a sub-optimal ratio for a 
fixed number of sample values.
Moreover, the work of~\cite{correa2020SampledrivenOptimalStopping} also establishes that the optimal ratio for 
a fixed number of samples and random number of samples coincide in the limit as $n$ goes to $\infty$.

\smallskip\noindent{\em Our model.} 
In the above works, the samples obtained in the sampling phase play the role of historical data.
Moreover, in the above works, the sampling phase has no cost, i.e., the decision-making process is separate from
the sampling phase.
However, in many real-world scenarios, the data is obtained as part of the decision-making process. 
We present a simple adaptation of the above model to capture that the sampling phase is similar to the decision 
making process.
In our model, an adversary chooses $2n$ items with values $v_1, v_2, \ldots, v_{2n}$ and a random order is applied
to these values. Then, there are two phases of Prophet Inequality:
(1)~{\em Phase~1.} The DM is presented with the first $n$ items (according to the random order) in an online fashion 
and must make an irrevocable decision, and once a value is accepted the phase stops. 
(2)~{\em Phase~2.} The DM is presented with the last $n$ items (according to the random order) in an online 
fashion and must make an irrevocable decision at each step.
The payoff is the sum of values obtained in the two phases. 
This model is motivated by the fact that the samples are obtained as part of the decision-making process.

\smallskip\noindent{\em Our contribution.}
First, note that in our setting if there is only one phase, then the optimal ratio is $1/e \approx 0.367$. Indeed, the problem would be equivalent to the Secretary Problem and the optimal ratio was derived by~\cite{gilbert1966RecognizingMaximumSequence}. For the two-phase process we establish the following results:
\begin{compactitem}
    \item For basic algorithms, that either treat the two phases independently or there is a single observation period, the optimal ratio is at most $0.368$ and $0.450$, respectively.
    \item We present an algorithm that achieves a ratio of at least $0.495$.
    \item For every algorithm the ratio is at most $0.502$.
\end{compactitem}
In other words, the bound for our algorithm and the upper bound shows that our algorithm is near-optimal.

Finally, we also consider the extension where the payoff is a convex combination of the two phases, i.e., the sum of $(1 - \lambda)$ times Phase~1 and $\lambda$ times Phase~2, 
where $\lambda \in [0, 1]$.
This provides relative importance to the two phases.
We generalize our algorithm to provide a lower bound for any convex combination. Also, we generalize our upper bound. Particular values of $\lambda$ recover previous results. For example, $\lambda = 0$ models the situation where the DM is indifferent about Phase~2, recovering the classical single-choice Prophet Inequality with no prior information.
The case of $\lambda = 1$ models the situation where the DM is indifferent about Phase~1, recovering the Sample-driven Prophet Inequality where half of the items are sampled~\cite{correa2020SampledrivenOptimalStopping, kaplan2019CompetitiveAnalysisSample}. The optimal performance for this problem is approximately $0.671$, due to~\cite{correa2020SampledrivenOptimalStopping}. In conclusion, this recovers a known result and shows that we present a unifying approach.

\iftoggle{SODA}
{
    \smallskip
    \noindent\textbf{Further related works.}
}{
    \subsection{Further related works}
}
The Secretary Problem has been studied since 1950, and the optimal ratio is $1/e \approx 0.367$, see~\cite{ferguson1989WhoSolvedSecretary} for a survey of the Secretary Problem and its variants. 
The multi-choice Secretary Problem considers the selection of more than one item and has been studied in~\cite{gilbert1966RecognizingMaximumSequence, assaf2000SimpleRatioProphet, kleinberg2005MultiplechoiceSecretaryAlgorithm}.
Similarly, the multi-choice Prophet Inequality is an active research area. See for example~\cite{hajiaghayi2007AutomatedOnlineMechanism, alaei2014BayesianCombinatorialAuctions}.
In our setting, the DM can accept two values which must belong to different phases.
Although the models are similar, there is a crucial difference: both the DM and the Prophet are constrained to accept at most one item in 
each Phase, whereas previous models do not have this constraint.  
For example, in the 2-choice Prophet Inequality, if there is full knowledge of the distributions of the values, the work of Alaei~\cite[Section 4]{alaei2014BayesianCombinatorialAuctions} shows a lower bound of $1 - 1 / \sqrt{5} \approx 0.5527$.
In contrast, in our model (where there is no prior knowledge of the values and the phase constraint is present), we present a lower bound (through our algorithm) of~$0.495$ and an upper bound of~$0.502$.
This shows that these models, though similar, have quite different guarantees. Modern sample-driven Secretary Problem has been studied, for example, in~\cite{kaplan2019CompetitiveAnalysisSample, correa2021SecretaryProblemIndependent}.

\iftoggle{SODA}
{
    \vspace{-1em}
    \section{Preliminaries}
    \label{Section: Preliminaries}
    \vspace{-0.5em}
}{
    \section{Preliminaries}
    \label{Section: Preliminaries}
}

\subsection{Model}

We denote the set of natural numbers by $\NN$, the discrete interval containing only natural numbers between $a$ and $b$ by $[a \until b]$, i.e. $\{a, a+1, \ldots, b\} = [a, b] \cap \NN$, where $0 \le a \le b$.

\smallskip
\noindent\textbf{Repeated Prophet Inequality.} We consider the following optimal stopping problem. An adversary chooses a list of $2n$ items with associated real values $v_1 \ge v_2 \ge \ldots \ge v_{2n} \ge 0$ and nature chooses a random permutation $\sigma \colon [1 \until 2n] \to [1 \until 2n]$. Then, the process follows in two phases.
\begin{compactitem}
	\item \textbf{Phase~1.} The first $n$ items are presented to the DM one by one according to $\sigma$, i.e., $v_{\sigma(1)}, \allowbreak v_{\sigma(2)}, \allowbreak \ldots, \allowbreak v_{\sigma(n)}$. When the $i$-th item arrives, the DM does not have access to the numerical value $v_{\sigma(i)}$ but can compare it with the previous items, i.e. $v_{\sigma(1)}, v_{\sigma(2)}, \ldots, v_{\sigma(i - 1)}$. In other words, she knows its relative ranking within the items observed so far. After the arrival of the $i$-th item, the DM must make an irrevocable decision to either accept the item and stop this phase (no more items from this phase will be revealed) or reject the item forever and proceed to the next item, if any. When accepting an item, its value is not revealed to the DM.

	\item \textbf{Phase~2.} The last $n$ items are presented to the DM one by one according to $\sigma$, i.e., $v_{\sigma(n + 1)}, \allowbreak v_{\sigma(n + 2)}, \allowbreak \ldots, \allowbreak v_{\sigma(2n)}$. When the $i$-th item of this phase arrives, the DM does not have access to the numerical value $v_{\sigma(n + i)}$ but can compare it with the previous items from this phase and all previously seen items from Phase~1. In other words, she knows its relative ranking within the items observed so far. After the arrival of the $i$-th item, the DM must make an irrevocable decision to either accept the item and stop this phase, and therefore the process, or reject the item forever and proceed to the next item.
\end{compactitem}
Since items may have equal values, we assume that there is an arbitrary tie-breaking rule that is consistent with the relative ranks revealed and publicly selected before the process starts.
In particular, only after observing all items, the DM can be sure of the absolute ranking of each item.
If in any of the two phases the DM does not accept an item, then she gets an item with a value of zero in the respective phase.
Her goal is to maximize the expectation of what she gets, i.e., the sum of the values of the items she accepted.

\smallskip
\noindent\textbf{Value-knowledge.} Upon an instance $\bm{v}$, the DM may or may not know the values $v_1, v_2, \ldots, v_{2n}$ in advance. This defines two variants of the Repeated Prophet Inequality. If the DM knows these values in advance, we call it the \emph{values observed} variant. If the DM does not know the values in advance, we call it the \emph{value-oblivious} variant.

\smallskip
\noindent\textbf{Algorithm.} In the Repeated Prophet Inequality, the number of items is fixed and known by the DM. Then, an \emph{algorithm} for the DM describes how to act while processing $2n$ items and is denoted by $\ALG_n$. By following the algorithm $\ALG_n$ on an instance $\bm{v}$, the DM obtains the sum of two random values, denoted by $\ALG_n(\bm{v})$ (or simply $\ALG_n$ when the instance is clear from context). Abusing of notation, we call \emph{algorithm} to a family of algorithms, one for every instance size, formally $\sALG = (\ALG_n)_{n \in \NN}$.

\smallskip
\noindent\textbf{Limit performance.} Following a competitive analysis perspective, we are interested in the worst case ratio (over all possible instances) between the expectation of what the DM accepts and the expectation of what a Prophet, who always accepts the two items with the largest values, obtains. This corresponds to the performance of an optimal offline algorithm. 
The \emph{limit performance} of $\sALG$ is the worst case ratio it guarantees for instances arbitrarily large. Formally, the \emph{limit performance} of $\sALG$ is
\[
	\perf(\sALG) 
    \defas \liminf_{n \to \infty} \inf_{\bm{v}}
	   \frac{ \EE(\ALG_n({\bm{v}})) }{ \EE \left(\max \left\{ v_{\sigma(i)} : i \in [1 \until n] \right\} + \max \left\{ v_{\sigma(i)} : i \in [(n + 1) \until 2n] \right\} \right) } \,.
\]
We are interested in the best limit performance possible, i.e., abusing of notation,
\[
	\perf \defas \sup_{\sALG} \perf(\sALG) \,.
\]

\smallskip
\noindent\textbf{Record.} When the DM is faced with a new item whose value is larger than everything she has observed so far, we call this item a \emph{record}. Records are of particular interest in many optimal stopping time problems and are used to define many algorithms later on.

\smallskip
\noindent\textbf{Repeated Secretary Problem.} As a variant of the Repeated Prophet Inequality, we also consider a different but closely related goal for the DM. In this optimal stopping problem, the goal of the DM is to maximize the probability of accepting the maximum value in the instance (denote by $v_1$), as opposed to maximizing the expected value of the sum of her choices. Note that, unlike the classical Secretary problem, the DM has two opportunities to pick the maximum value.

\subsection{Our Contributions}

\begin{lemma}
	\label{Result: Observe-exploit basic algorithm upperbound}
	Algorithms that have a single observation period and pick any value greater than the items in the observation period have a limit performance of at most $0.450$.
\end{lemma}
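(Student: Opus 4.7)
My plan is to exhibit one hard instance and show that any algorithm in the class performs at most $0.450$ on it. Take the deterministic instance $v_1 = 1$, $v_j = 0$ for $j \in [2 \until 2n]$, with any fixed tie-breaking among the zero-valued items. On this instance the Prophet obtains $v_1 = 1$ in whichever phase contains $v_1$ and $0$ in the other, so $\EE[\text{Prophet}] = 1$ and the algorithm's ratio equals $\PP[v_1 \text{ is accepted}]$.

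Parameterize any algorithm in the class by its asymptotic observation fraction $\beta \defas \lim_n k_n/(2n) \in [0,1]$, where $k_n$ is the length of its observation window. If $v_1 \in \text{obs}$ (asymptotic probability $\beta$), the observed maximum rank equals $1$, so no subsequent item is a record and the algorithm accepts nothing. Conditioning on $v_1 \notin \text{obs}$, let $X$ denote the minimum rank inside the observation window, and set $N \defas X - 1 \ge 1$. A standard counting argument shows that asymptotically $\PP[N = m] = (1-u) u^{m-1}$ for $m \ge 1$, with $u \defas e^{-\beta}$. The acceptable items are exactly the $N$ items of rank less than $X$: $v_1$ together with $N - 1$ zero-valued items.

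A symmetry argument over the (asymptotically independent, uniform) placement of these $N$ acceptable items among the $2n - k$ non-observation positions yields
\[
	\PP[v_1 \text{ accepted} \mid v_1 \notin \text{obs}, N] = \frac{2 - p^N - q^N}{N},
\]
where $p \defas \max(1 - 2\beta, 0) / (2(1-\beta))$ and $q \defas 1 - p$ are the asymptotic fractions of non-observation positions lying in Phase~1 after observation and in Phase~2 respectively; in particular $p = 0$ when $\beta > 1/2$, and the formula degenerates to $1/N$. Taking expectation over $N$ via $\sum_{m \ge 1} x^m/m = -\ln(1-x)$, the asymptotic ratio becomes
\[
	R(\beta) = \frac{(1-\beta)(1-u)}{u}\bigl[-2\ln(1-u) + \ln(1 - up) + \ln(1 - uq)\bigr],
\]
with $R(0) = 0$ by continuity. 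A routine single-variable analysis, or direct numerical evaluation, then confirms $\sup_{\beta \in [0,1]} R(\beta) \le 0.450$.

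The main technical obstacle I anticipate is the clean and rigorous justification of the limiting distribution of $N$ and of the asymptotic independence underpinning the symmetry argument (both follow from standard concentration as $n \to \infty$, but require care at the boundary $\beta = 1/2$ where the formula for $p$ transitions); the final scalar optimization of $R$ itself is routine.
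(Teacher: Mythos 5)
Your overall strategy is the same as the paper's: reduce the limit performance to the probability of accepting $v_1$ via the spike instance $(1,0,\ldots,0)$, compute that probability asymptotically as a function of the observation fraction, and optimize. Your route to that probability differs in detail (you condition on the number $N$ of items outranking the entire observation window and use a placement symmetry, whereas the paper conditions on the position of $v_1$ and uses the record probability $\lceil xn\rceil/(i-1)$), and your symmetry formula $(2 - p^N - q^N)/N$ is correct. However, there is a genuine error in the limiting distribution of $N$. The event $\{N \ge m\}$ is the event that the $m$ top-ranked items all avoid the observation window of $k \approx 2\beta n$ out of $2n$ positions, so
\[
\PP[N \ge m] \;=\; \prod_{j=0}^{m-1}\frac{2n - k - j}{2n - j} \;\xrightarrow[n\to\infty]{}\; (1-\beta)^m \,,
\]
and hence, conditioned on $v_1 \notin \mathrm{obs}$, $N$ is geometric with ratio $u = 1-\beta$, not $u = e^{-\beta}$.

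This is not a cosmetic slip. Since $e^{-\beta} > 1-\beta$ for $\beta\in(0,1)$ and $m \mapsto (2 - p^m - q^m)/m$ is decreasing, your $R(\beta)$ strictly underestimates the true acceptance probability (e.g.\ at $\beta = 0.27$ it gives roughly $0.42$ versus the true value $\approx 0.449$). An underestimate is fatal when the goal is an upper bound: verifying $\sup_\beta R(\beta) \le 0.450$ then says nothing about the algorithms' actual performance. With the correction $u = 1-\beta$ your expression simplifies, using $(1-\beta)p = (1-2\beta)/2$ and $(1-\beta)q = 1/2$ for $\beta \le 1/2$, to $\beta\bigl[-2\ln\beta + \ln(1+2\beta) - 2\ln 2\bigr] = -\tfrac{x}{2}\ln\tfrac{x^2}{x+1}$ with $x = 2\beta$, which is exactly the paper's formula; its maximum is about $0.4497 < 0.450$ at $x \approx 0.545$. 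So the reduction, the symmetry step, and the final optimization all go through once the distribution of $N$ is fixed, but as written the proof does not establish the claimed bound.
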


\begin{lemma}
	\label{Result: Independent-phases Basic algorithm upperbound}
	Algorithms that do not compare items from the second phase to items of the first phase have a limit performance of at most $1/e \approx 0.367$.
\end{lemma}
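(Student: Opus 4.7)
The plan is to exhibit a single hard instance on which any algorithm in the stated class is forced to lose a factor of roughly $1/e$. I would take $\bm{v}$ with $v_1 = 1$ and $v_j = 0$ for all $2 \le j \le 2n$, so that the unique non-zero item lands in exactly one phase and the Prophet deterministically collects $M_1 + M_2 = 1$, where $M_i$ denotes the maximum value of the items in Phase $i$.

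Next I would use the hypothesis that the algorithm does not compare Phase~2 items to Phase~1 items. Under this restriction, the decisions made in Phase~2 depend only on the relative ranks among the Phase~2 items, so Phase~2 becomes a standalone instance of the classical Secretary Problem on $n$ items; Phase~1 already has this form. On the chosen instance, every item but one has value $0$, so the contribution from Phase $i$ is an indicator that equals $1$ precisely when the Phase-$i$ rule selects the maximum of that phase, and only the phase containing the special item has a phase-maximum of value $1$.

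Writing $P_n^*$ for the optimal success probability in the $n$-item Secretary Problem and conditioning on which phase holds the special item, I would obtain $\EE[\ALG_n(\bm{v})] \le \tfrac{1}{2}P_n^* + \tfrac{1}{2}P_n^* = P_n^*$. Since $P_n^* \to 1/e$, the ratio $\EE[\ALG_n(\bm{v})]/\EE[M_1 + M_2] \le P_n^* \to 1/e$, and taking $\liminf_{n \to \infty}$ of the infimum over instances yields the claim. The main delicate point is the reduction of each phase to a standalone Secretary Problem: it relies on (i) the single-non-zero-item instance, which turns the expected-value comparison into the probability of selecting the maximum, and (ii) the no-cross-phase-comparison hypothesis, which prevents Phase~2 from leveraging structural information gathered in Phase~1 beyond what a fresh Secretary instance already supplies. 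With both ingredients in place, the bound is exactly the classical $1/e$ barrier applied independently to the two phases.
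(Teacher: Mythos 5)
Your proposal is correct and follows essentially the same route as the paper: your single-nonzero-item instance $\bm{v}=(1,0,\ldots,0)$ is exactly the content of the paper's Lemma~\ref{Result: Choosing the maximum upperbound} (reducing limit performance to the probability of accepting $v_1$), after which both arguments condition on which phase contains $v_1$ and bound each phase by the classical $1/e$ Secretary barrier. No substantive difference.
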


In \Cref{Section: Our Algorithm}, we define our algorithm that has the following property.

\begin{theorem}
	\label{Result: Algorithm lowerbound}
	Our algorithm has a limit performance of at least $0.495$.
\end{theorem}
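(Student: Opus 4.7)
The plan is to exhibit an explicit rank-based algorithm $\sALG$ and lower bound its limit performance directly. For \textbf{Phase~1}, I would use a classical secretary-style observation rule: observe the first $\alpha n$ items without accepting, then accept the first subsequent \emph{record} (an item whose relative rank within what has been seen so far equals~$1$). For \textbf{Phase~2}, the DM's strategy must exploit whatever Phase~1 has revealed as a sample set. A natural choice is to run a further observation sub-phase of length $\beta n$ inside Phase~2 (if needed) and then accept an item that is a record relative to all items observed so far across both phases, possibly relaxed to the top-$r$ item. The parameters $\alpha$, $\beta$, $r$ are then tuned numerically. In view of \Cref{Result: Observe-exploit basic algorithm upperbound} and \Cref{Result: Independent-phases Basic algorithm upperbound}, the algorithm must both treat the two phases non-independently and avoid a single observation structure in order to pass the $0.450$ barrier.

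The analysis decomposes $\EE(\ALG_n)$ as $\sum_{k=1}^{2n} v_k\bigl(p_k^{(1)} + p_k^{(2)}\bigr)$, where $p_k^{(j)}$ is the probability that the item of rank $k$ is the one accepted in Phase~$j$. For $p_k^{(1)}$ I can adapt the classical secretary calculation directly, since the relative ranks of the first $n$ items are uniform. For $p_k^{(2)}$ I must condition on the Phase~1 outcome: if the DM accepted at position $i \le n$, she enters Phase~2 with $i$ samples (a uniformly random subset of size $i$ from the $2n$ items); otherwise she enters with all $n$ Phase~1 items as samples. The non-acceptance branch is where the algorithm harvests sample-driven gains, echoing the $\lambda = 1$ regime mentioned in the introduction.

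To convert the lower bound on $\EE(\ALG_n)$ into a bound on the ratio, I need to upper bound the benchmark $\EE(\max_{i \in [1 \until n]} v_{\sigma(i)}) + \EE(\max_{i \in [(n+1) \until 2n]} v_{\sigma(i)})$. Writing this also as $\sum_k v_k q_k$ for explicit coefficients $q_k$, the ratio becomes $\sum_k v_k(p_k^{(1)} + p_k^{(2)})/\sum_k v_k q_k$, a quotient of two linear functionals of $\bm{v}$. By a standard extremal argument (the infimum over $\bm{v} \ge 0$ of such a ratio is attained on a vector supported on a single coordinate), the worst-case limit reduces to checking the instance-wise ratios $(p_k^{(1)} + p_k^{(2)})/q_k$ individually; in the relevant limit $n \to \infty$, only the top constantly-many ranks $k$ contribute, so it suffices to verify the bound $\geq 0.495$ for a finite collection of integrals.

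The main obstacle will be the Phase~2 analysis, specifically the coupling with Phase~1 through the random stopping time. This coupling makes the number of samples available in Phase~2 random and correlated with the value (if any) already secured, so the two terms $p_k^{(1)}$ and $p_k^{(2)}$ cannot be optimized independently. Setting up clean continuous-time (i.e., $n \to \infty$) integral expressions for both quantities, and verifying by numerical evaluation at the optimized $(\alpha, \beta, r)$ that every rank-$k$ ratio clears $0.495$, is the technical crux; once this bookkeeping is done, the final inequality reduces to checking a short table of one-dimensional integrals.
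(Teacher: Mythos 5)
Your architecture coincides with the paper's up to one decisive component: Phase~1 is a secretary rule with a tuned observation fraction, Phase~2 treats the Phase~1 items as samples, and the analysis conditions on the random stopping time of Phase~1 exactly as in \Cref{Result: Distribution of stopping time} and \Cref{Result: Limit performance of RPI[x]}. The gap is your Phase~2 acceptance rule. The paper runs, in Phase~2, the \emph{optimal multi-threshold strategy} of Correa et al.\ for the sample-driven Prophet Inequality: an infinite sequence of rank-dependent time thresholds $t_1\le t_2\le\cdots$, accepting an item of relative rank $i$ once a fraction $t_i$ of items has passed. This is what makes the conditional Phase~2 ratio equal $\alpha\bigl(u/(1+u)\bigr)$, the \emph{optimal} sample-driven constant (e.g.\ $\alpha(1/2)\approx 0.671$), and the bound $0.495$ is obtained by numerically evaluating $\tfrac{x}{2}\bigl(-\ln x+\int_x^1\alpha(u/(1+u))u^{-2}\,du+\alpha(1/2)\bigr)$ with LP-certified lower bounds on $\alpha$. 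Your single observation window plus ``accept a record (or top-$r$) item'' cannot substitute for this. On an instance whose top $m$ values are nearly equal and the rest zero, a record rule facing a sample fraction $p$ secures a valuable item only when the single largest item falls in the online part, so its sample-driven ratio is at most $1-p$ (equal to $0.5$ at $p=1/2$, versus $\alpha(1/2)\approx 0.671$); combining this with the unconditional optimality bound $\tfrac{1}{e(1-p)}$ for $p\le 1/e$ and integrating against the stopping-time density caps the phase-separated lower bound for your rule near $0.45$ for every choice of the Phase~1 fraction. So the proposal, as written, has no route to $0.495$; you would need to import the threshold strategy and the function $\alpha$ (reducing Phase~2 to a cited black box, as the paper does) or carry out a new worst-case analysis of your $(\alpha,\beta,r)$ family with no guarantee the constant is attainable.

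There is also a flaw in your reduction over instances. Since the adversary's values satisfy $v_1\ge v_2\ge\cdots\ge v_{2n}\ge 0$, the extreme rays of the feasible cone are the decreasing prefix vectors $(1,\dots,1,0,\dots,0)$, not single coordinates (a single nonzero coordinate other than the first violates the ordering). The infimum of the linear-fractional ratio is therefore $\min_m \sum_{k\le m}\bigl(p_k^{(1)}+p_k^{(2)}\bigr)\big/\sum_{k\le m}q_k$ over prefixes, with $m$ allowed to grow with $n$, and these many-near-ties prefixes are precisely the instances on which record-based rules degrade. Restricting to the single-coordinate instance only certifies the probability of catching $v_1$, which is the \emph{upper}-bound device of \Cref{Result: Choosing the maximum upperbound}, not a lower bound on $\perf$. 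Consequently the claim that ``only the top constantly-many ranks contribute'' and that the verification reduces to ``a short table of one-dimensional integrals'' is not justified; the coupled prefix-by-prefix computation is exactly the part of the argument that is missing.
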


\begin{theorem}
	\label{Result: Algorithms upperbound}
	For every algorithm, the optimal limit performance is at most $0.502$.
\end{theorem}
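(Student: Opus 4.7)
The plan is to exhibit a hard deterministic instance and upper bound the performance of any algorithm on it as $n \to \infty$. I would consider the family of instances $\bm{v}^{(k)}$ consisting of $k$ items of value $1$ and $2n-k$ items of value $0$, for $k$ a small positive integer. A direct hypergeometric calculation shows that the Prophet's expected payoff converges to $2(1 - 2^{-k})$; in particular $k=2$ gives $3/2$ and $k=3$ gives $7/4$. I expect $k = 2$ or $k = 3$ to be the binding instance: large enough that the Prophet benefits meaningfully from picking one item in each phase, yet small enough that the DM faces substantial ambiguity in identifying the ``large'' items from rank information alone. Note that it suffices to work in the value-observed variant, since that model is at least as powerful as the value-oblivious one and so an upper bound there immediately carries over.

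For a fixed instance in this family, I would reduce an arbitrary algorithm to a canonical rank-threshold policy. Because the values are binary, the rank information observed by the DM collapses to the count of items of value $1$ observed in each phase so far. By an exchangeability argument over the uniform permutation, together with a monotonicity argument in the running count, the optimal algorithm can be assumed to depend only on (i) the current position within the active phase, (ii) the count of ones observed so far in each phase, and (iii) whether an item has already been accepted in Phase~1. The induced state space is finite and independent of $n$ in the Poissonized limit where each phase corresponds to the interval $[0,1]$ and the uniform positions of the $k$ large items become an exchangeable point process.

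After passing to this Poissonized limit, the optimal two-phase stopping problem becomes a finite backward-induction DP: one first computes the optimal Phase~2 continuation value conditional on the Phase~1 history summary, and then optimizes the Phase~1 rule. This yields the optimal DM payoff $\mathrm{Opt}_k$ explicitly, or by a short numerical optimization. The upper bound then follows by verifying $\mathrm{Opt}_k / 2(1 - 2^{-k}) \le 0.502$ for the critical $k$ and observing that this limits $\perf(\sALG)$ for every $\sALG$, since by the above reduction the ratio of any algorithm on this particular instance is at most $\mathrm{Opt}_k / (2(1 - 2^{-k}))$ asymptotically.

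The main obstacle is the reduction to the canonical threshold policy: the DM might in principle base her decision on finer combinatorial data --- for instance, the exact pattern of arrival times of records or correlations of such patterns across the two phases --- and it requires a careful argument to show that none of this extra information helps on the $\{0,1\}$-valued instance. The key ingredients are (a) exchangeability of positions within each phase, (b) the fact that the payoff depends only on whether the accepted item equals $1$, and (c) monotonicity of the optimal decision in the running count of ones. A secondary difficulty is the two-phase coupling in the DP: the optimal Phase~2 policy depends nontrivially on Phase~1 observations and on whether an item was already accepted there, so the DP state must carry this information, though it remains finite-dimensional in the Poissonized limit.
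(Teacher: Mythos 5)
Your overall plan --- pick a hard instance with only a few nonzero values, reduce any algorithm on that instance to a canonical one-parameter family, compute the asymptotic acceptance probability, and optimize --- has the right shape, and for $k=1$ it would essentially reproduce the paper's argument: the paper takes the instance $(1,0,\ldots,0)$, notes via \Cref{Result: Choosing the maximum upperbound} that the ratio on it equals the probability of accepting $v_1$ (i.e.\ the Repeated Secretary Problem), characterizes the optimal algorithm there as a Waiting algorithm $\sWAI[x]$, computes its limit acceptance probability in closed form (\Cref{Result: Limit probability of algorithms}), and optimizes over $x$ to get $0.502$ at $x^*\approx 0.463$. But there are two genuine gaps in what you propose. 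First, the constant $0.502$ is produced precisely by the $k=1$ instance; you explicitly expect $k=2$ or $k=3$ to be binding and defer all computation, so your argument as written does not yield the claimed bound. An upper bound requires exhibiting an instance on which no algorithm beats $0.502$, and there is no reason to believe $k\ge 2$ is harder than $k=1$: extra large items tend to help the DM more than the prophet, whose benchmark saturates at $2$. Without carrying out the DP for a specific $k$ and checking the resulting ratio, nothing is proved.

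Second, and more fundamentally, your state reduction is invalid for $k\ge 2$ in this observation model. The DM never observes numerical values --- even in the value-observed variant she knows the multiset of values in advance but, upon arrival, sees only the relative ranking of the items revealed so far (with a consistent tie-breaking). With $k$ ones and $2n-k$ zeros, the ones are the top $k$ items of the \emph{final} ranking, so at an intermediate time the DM cannot tell whether her current top-ranked arrivals are ones or zeros; the ``count of ones observed so far'' is not measurable with respect to her information, and the finite DP you describe is not a reduction of the actual problem. For $k=1$ this issue vanishes (only a record can be $v_1$), which is exactly why the paper works there; but then the substantive content is the characterization of the optimal policy (accept only records, monotonicity of acceptance in the arrival position within each phase, and the $\lceil (n+t)/e\rceil$ waiting rule in Phase~2 conditioned on the Phase-1 stopping time $t$) together with the distribution of that stopping time and the resulting integral formula --- none of which appears in your proposal.
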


\begin{Remark}
	Our algorithm of \Cref{Result: Algorithm lowerbound} is value-oblivious. Moreover, all upper bounds (\Cref{Result: Observe-exploit basic algorithm upperbound}, \Cref{Result: Independent-phases Basic algorithm upperbound} and \Cref{Result: Algorithms upperbound}) apply to the value observed variant of the problem. 
    Therefore, the above bounds apply to both value observed and value oblivious variants of the problem.
\end{Remark}

\subsection{Selected previous results}

We recall some foundational results due to~\cite{gilbert1966RecognizingMaximumSequence}, which we use in the sequel. 

\smallskip
\noindent \emph{Secretary strategy.} Consider the classical Secretary Problem. Then, for $x \in [0, 1]$, denote $\sSEC[x]$ the strategy that proceeds as follows. For an instance of size $n$, the strategy $\SEC[x]_{n}$ observes the first $\lceil xn \rceil$ items and then accepts any record that appears.

\begin{lemma}[Acceptance probability of Secretary algorithms~\cite{gilbert1966RecognizingMaximumSequence}]
\label{Result: Acceptance probability of Secretary algorithms}
    For all $x \in [0, 1]$, we have
    \[
        \lim_{n \to \infty} \PP(\SEC[x]_{n} \text{ accepts } v_1) = -x \ln(x) \,.
    \]
\end{lemma}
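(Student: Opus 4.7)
The plan is to derive an exact finite-$n$ expression for $\PP(\SEC[x]_n \text{ accepts } v_1)$ and then pass to the limit. Fix $x \in (0,1)$; the boundary cases $x \in \{0, 1\}$ are handled separately and give $0$ on both sides (under the convention $0 \ln 0 = 0$). Set $k = \lceil xn \rceil$ and condition on the position $J \in [1 \until n]$ of the maximum value $v_1$, which is uniform by the random-order assumption.

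Next, I characterize when $\SEC[x]_n$ accepts $v_1$. Two events must occur jointly: (i)~the maximum arrives after the observation window, i.e., $J > k$, and (ii)~no record strictly smaller than $v_1$ is accepted during positions $k+1, \dots, J-1$. Event~(ii) is equivalent to requiring that the maximum among the items in positions $1, \dots, J-1$ lies in positions $1, \dots, k$, since any later record (in positions $k+1, \dots, J-1$) would be precisely such a maximum and would be accepted. Conditional on $J = j > k$, the items in positions $[1 \until n] \setminus \{j\}$ form a uniformly random permutation of $\{v_2, \dots, v_n\}$, so by symmetry the largest item among the first $j-1$ positions is equally likely to occupy any one of those $j-1$ slots. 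Hence event~(ii) has conditional probability $k/(j-1)$.

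Summing over $j \in [(k+1) \until n]$ and using $\PP(J = j) = 1/n$ yields
\[
    \PP(\SEC[x]_n \text{ accepts } v_1) = \sum_{j = k+1}^{n} \frac{1}{n} \cdot \frac{k}{j-1} = \frac{k}{n} \sum_{i = k}^{n-1} \frac{1}{i} \,.
\]
Finally, the standard integral comparison gives $\sum_{i = k}^{n-1} 1/i = \ln(n/k) + o(1)$, and $k/n = \lceil xn \rceil / n \to x$, so the right-hand side tends to $x \cdot (-\ln x) = -x \ln x$, as claimed. There is no genuine obstacle here; the argument is a textbook calculation for the classical Secretary Problem and is recorded only to fix notation and asymptotics for later use in the paper.
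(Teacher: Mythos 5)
Your proof is correct, and the paper itself cites this result from Gilbert--Mosteller without proof; your argument is exactly the standard calculation, and it matches the computation the paper carries out for the analogous quantity $\PP(E_1)$ in the proof of \Cref{Result: Observe-exploit basic algorithm upperbound} (conditioning on the position of $v_1$, using the $\lceil xn\rceil/(i-1)$ probability that the running maximum sits in the observation window, and passing to the Riemann sum $x\int_x^1 u^{-1}\,du$). Nothing further is needed.
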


\begin{lemma}[Optimal strategy in the Secretary Problem~\cite{gilbert1966RecognizingMaximumSequence}]
\label{Result: Optimal strategy in the Secretary Problem}
    Let a fixed proportion $\rho \in [0, 1]$. Consider the $1$-choice Secretary Problem with $n$ items where the DM is restricted to observe but not accept the first $\lceil n \rho \rceil$ items revealed. Then, an asymptotically optimal algorithm is to observe but not accept the first $\max \{ \lceil n \rho \rceil , \lceil n / e \rceil \}$ items and then accept any record that appears.
\end{lemma}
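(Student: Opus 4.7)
The plan is to reduce the claim to a one-dimensional calculus problem via \Cref{Result: Acceptance probability of Secretary algorithms}. First I would argue that among all stopping rules that respect the constraint ``reject the first $\lceil n\rho \rceil$ items,'' there is an asymptotically optimal rule of threshold-record form, i.e.\ of the shape $\SEC[x]$ for some $x \in [\rho, 1]$. Write $V_i$ for the optimal conditional success probability given that positions $1, \ldots, i$ have been rejected and the DM must henceforth only pick from $\{i+1, \ldots, n\}$. Then $V_n = 0$ and, by the standard one-step recursion, the conditional probability that position $i$ is the overall maximum given that it is a current record equals $i/n$. Hence it is optimal to accept a record at position $i$ iff $i/n \ge V_i$. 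A classical monotonicity check shows that $(V_i)_i$ is non-increasing while $i/n$ is increasing, so the set of acceptance positions is an upward-closed interval $\{k^*, k^*+1, \ldots, n\}$; imposing $i \ge \lceil n\rho \rceil$ only raises the lower endpoint to $\max\{k^*, \lceil n\rho \rceil\}$, so the optimal rule remains of $\SEC[\,\cdot\,]$ type.

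Given this reduction, the problem becomes $\max_{x \in [\rho, 1]} g(x)$ where, by \Cref{Result: Acceptance probability of Secretary algorithms}, $g(x) \defas \lim_{n \to \infty} \PP(\SEC[x]_n \text{ accepts } v_1) = -x \ln x$. Differentiating, $g'(x) = -\ln(x) - 1$, which is positive on $(0, 1/e)$ and negative on $(1/e, 1)$, so $g$ is unimodal with peak at $x = 1/e$. Consequently the maximizer of $g$ on $[\rho, 1]$ is $x^\star = \max\{\rho, 1/e\}$: if $\rho \le 1/e$ the unconstrained optimum $1/e$ is feasible, and if $\rho > 1/e$ monotonicity of $g$ on $[1/e, 1]$ forces $x^\star = \rho$. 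Translating back to item counts yields the observation length $\max\{\lceil n\rho \rceil, \lceil n/e \rceil\}$ announced in the statement.

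The main obstacle is the first step, namely rigorously ruling out more complicated strategies that might, for instance, skip some records and accept others based on refined rank information. I would handle this by carrying out the backward induction above in detail, verifying that $V_i$ depends only on $i$ (not on the ranks observed so far, since those are independent of future ranks), and that $i \mapsto V_i$ is non-increasing. This is exactly the classical Gilbert--Mosteller argument, with the only adjustment being that the acceptance region is intersected with $\{\lceil n\rho \rceil, \ldots, n\}$; no further subtlety arises, because the constraint is a pure prefix restriction and does not interact with the dynamic-programming recursion past position $\lceil n\rho \rceil$. Rounding $\lceil n x^\star \rceil$ to integers contributes only an $o(1)$ loss in the asymptotic success probability, which is absorbed in the ``asymptotically optimal'' conclusion.
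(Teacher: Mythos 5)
Your proof is correct: the backward-induction argument (records only, $i/n$ versus the non-increasing continuation value $V_i$, hence a threshold rule whose cutoff is only pushed up by the prefix constraint) is exactly the classical Gilbert--Mosteller argument, and the paper itself states this lemma without proof, importing it from that reference. No gaps; the only cosmetic redundancy is that your dynamic program already pins down the threshold $\max\{\lceil n\rho\rceil,\lceil n/e\rceil\}$, so the subsequent optimization of $-x\ln x$ over $[\rho,1]$ merely re-confirms it in the limit.
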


\noindent\textbf{Overview of the paper.}
The rest of the paper is organized as follows. \Cref{Section: Basic algorithms} shows that basic algorithms achieve a performance of at most $0.450$. \Cref{Section: Our Algorithm} presents an algorithm that achieves a performance of at least $0.495$. \Cref{Section: Upper bound} shows an upper bound of $0.502$ that applies to any possible algorithm. \Cref{Section: Convex Combination of Phases} extends our results for the case that the goal of the DM is a convex combination of what she obtains during Phase~1 and Phase~2.

\iftoggle{SODA}
{
    \vspace{-1em}
    \section{Performance of Basic Algorithms }
    \label{Section: Basic algorithms}
    \vspace{-0.5em}
}{
    \section{Performance of Basic Algorithms }
    \label{Section: Basic algorithms}
}

In this section, we analyze two basic algorithms. First, we consider algorithms with a single observation period. Then, we consider algorithms that treat both phases independently.  \Cref{Result: Observe-exploit basic algorithm upperbound} and \Cref{Result: Independent-phases Basic algorithm upperbound} state that they have an upper bound of $0.450$ and $0.368$, respectively.

\iftoggle{SODA}
{
    \smallskip
    \noindent\textbf{Single observation period.}
    We analyze}{
    \subsection{Single observation period}

    In this section, we analyze}
algorithms that act in two stages. For $x \in [0, 1]$, we denote the algorithm $\sSOP[x]$. Upon an instance with $2n$ items, $\SOP[x]_n$ proceeds as follows. The first $\lceil xn \rceil$ items are observed but not accepted. This constitutes the observation stage. Then, the online stage starts. During Phase~1, any new item is compared only with the items from the observation stage and chosen if its value is larger than all of them. During Phase~2, $\SOP[x]_n$ proceeds similarly, comparing only with items from the observation stage. In particular, an item accepted during Phase~2 may not be larger than an item accepted during Phase~1 because values of Phase~2 are compared only to the observation period. \Cref{Result: Observe-exploit basic algorithm upperbound} states that, for all $x \in [0, 1]$, the algorithm $\sSOP[x]$ has a limit performance of at most $0.450$.

\emph{Overview of the proof.} We start by proving a general upper bound that applies to all algorithms for the Repeated Prophet Inequality. The upper bound states that the limit performance of any algorithm is at most the probability of accepting the item with the largest value, i.e. $v_1$. Then, we deduce the exact probability for the family of algorithms $(\sSOP[x])_{x \in [0, 1]}$. Finally, optimizing over $x$ the result follows.

\begin{lemma}[Choosing the maximum upperbound]
\label{Result: Choosing the maximum upperbound}
	Consider the Repeated Prophet Inequality, with known or unknown values. Then, the limit performance of any algorithm is bounded by its probability of accepting the maximum value in arbitrarily big instances. Formally, for any algorithm $\sALG$,
	\[
		\perf(\sALG) \le \liminf_{n \to \infty} \PP(\ALG_n \text{ accepts the maximum value}) \,.
	\]
\end{lemma}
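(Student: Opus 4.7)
The proof is a simple worst-case instance argument. The plan is to exhibit a single family of instances, one for each $n$, where the ratio between the algorithm's expected payoff and the prophet's expected payoff is exactly $\PP(\ALG_n \text{ accepts the maximum value})$, and then deduce the bound by taking the infimum over instances and the liminf in $n$.

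Concretely, I would fix any algorithm $\sALG$ and, for every $n$, consider the instance $\bm{v}^{(n)}$ defined by $v_1 = 1$ and $v_2 = v_3 = \cdots = v_{2n} = 0$. Under the random permutation $\sigma$, the item of value $1$ lies in exactly one of the two phases, so the maximum of one phase equals $1$ and the maximum of the other equals $0$. Consequently, the prophet's expected payoff on this instance is
\[
    \EE \left(\max \left\{ v_{\sigma(i)} : i \in [1 \until n] \right\} + \max \left\{ v_{\sigma(i)} : i \in [(n + 1) \until 2n] \right\} \right) = 1 \, .
\]
On the other hand, the DM receives positive value only when she accepts the unique item with value $1$, and in that case she receives exactly $1$. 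Hence $\EE(\ALG_n(\bm{v}^{(n)})) = \PP(\ALG_n \text{ accepts the maximum value})$, and the ratio on this instance equals $\PP(\ALG_n \text{ accepts the maximum value})$. This identity is insensitive to whether the values are observed or oblivious: in both variants, the payoff collected by any strategy on this instance is $\mathbbm{1}\{\text{max item accepted}\}$.

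Since the infimum over all instances is at most the value on this particular instance,
\[
    \inf_{\bm{v}} \frac{ \EE(\ALG_n({\bm{v}})) }{ \EE(\text{prophet}) }
    \;\le\; \PP(\ALG_n \text{ accepts the maximum value}) \, .
\]
Taking $\liminf_{n \to \infty}$ on both sides yields $\perf(\sALG) \le \liminf_{n \to \infty} \PP(\ALG_n \text{ accepts the maximum value})$, as desired.

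There is no real obstacle: the instance with a single nonzero value decouples the two phases (the prophet cannot benefit from a "second-best" item because there is none) and pins down the ratio exactly. The only subtlety worth flagging is that the argument works uniformly for both the value-observed and value-oblivious variants, because on this instance the algorithm's payoff is literally the indicator of accepting the maximum, independently of what the DM is allowed to see.
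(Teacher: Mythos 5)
Your proposal is correct and is essentially identical to the paper's own proof: both use the instance $\bm{v} = (1, 0, \ldots, 0)$, observe that the prophet's expected payoff is exactly $1$ while the DM's expected payoff equals the probability of accepting the maximum, and then pass to the infimum over instances and the liminf in $n$. No further comment is needed.
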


\iftoggle{SODA}
{
    \begin{proof}[Sketch proof]
        Consider the instance where the maximum value is one and all others are zero, i.e. ${\bm v} = (1, 0, 0, \ldots, 0) \in \RR^{2n}$. Then, the expectation of the DM is one only if she accepts $v_1$. On the other hand, the expectation of the prophet is always exactly one. Therefore, since this is a particular instance, the performance of $\sALG$ is bounded by the corresponding probability of choosing the item with maximum value.
    \end{proof}
}{
    \begin{proof}
    	Fix $n \in \NN$. Consider the instance where the maximum value is one and all others are zero, i.e. ${\bm v} = (1, 0, 0, \ldots, 0) \in \RR^{2n}$. Then, the prophet always obtains $1$, while the DM obtains $1$ only if she accepts the maximum value $v_1$. Therefore, for any strategy $\ALG_n$, we have that
    	\[
    		\frac{ \EE(\ALG_n({\bm{v}})) }{ \EE \left(\max \left\{ v_{\sigma(i)} : i \in [1 \until n] \right\} + \max \left\{ v_{\sigma(i)} : i 	\in [(n + 1) \until 2n] \right\} \right) }
    		= \PP(\ALG_n \text{ accepts the maximum value}) \,.
    	\]
    	Taking the limit of $n$ growing to $\infty$, we conclude that, for any algorithm $\sALG$,
    	\[
    		\perf(\sALG) \le \liminf_{n \to \infty} \PP(\ALG_n \text{ accepts the maximum value}) \,.
    	\]
    \end{proof}
}

\iftoggle{SODA}
{
    \begin{proof}[Sketch proof of \Cref{Result: Observe-exploit basic algorithm upperbound}]
        Let $x \in [0, 1]$. We prove that $\perf(\sSOP[x]) \le 0.450$. By \Cref{Result: Choosing the maximum upperbound}, we only need to bound the probability that $\sSOP[x]$ accepts $v_1$. During Phase~1, $\sSOP[x]$ acts as a Secretary algorithm where $\lceil xn \rceil$ items are observed and there are $n$ items in total. During Phase~2, $\sSOP[x]$ acts as a Secretary algorithm where $\lceil xn \rceil$ items are observed and there are $( \lceil xn \rceil + n)$ items in total. Therefore, by \Cref{Result: Acceptance probability of Secretary algorithms}, the corresponding probabilities are $-x \ln(x) / 2$ and $\ln \left( x / (x + 1) \right) / 2$. Optimizing over $x \in [0, 1]$ yields the result.
    \end{proof}
}{
    We now turn to prove \Cref{Result: Observe-exploit basic algorithm upperbound}.

    \begin{proof}[Proof of \Cref{Result: Observe-exploit basic algorithm upperbound}]
    	Fix $x \in [0, 1]$ and $n \in \NN$. Denote $E_1$ and $E_2$ the events that $\SOP[x]_n$ accepts the item with the largest value during Phase~1 and Phase~2 respectively. Then,
    	\begin{align*}
    		\PP(E_1)
    		 & = \frac{1}{2} \PP( E_1 \mid \sigma^{-1}(1) \le n)
    		 & (v_1 \text{is in Phase~1})                                       \\
    		 & = \frac{1}{2} \sum_{i = 1}^n \PP( E_1 \mid \sigma^{-1}(1) = i) \cdot \PP( \sigma^{-1}(1) = i \mid \sigma^{-1}(1) \le n)
    		 & ( \text{position of } v_1 )                                   \\
    		 & = \frac{1}{2} \sum_{i = \lceil xn \rceil + 1}^n \PP( E_1 \mid \sigma^{-1}(1) = i) \cdot \PP( \sigma^{-1}(1) = i \mid \sigma^{-1}(1) \le n)
    		 & ( \SOP[x]_n \text{ rejects } \lceil xn \rceil \text{ items})              \\
    		 & = \frac{1}{2} \sum_{i = \lceil xn \rceil + 1}^n \frac{\lceil xn \rceil}{i - 1} \cdot \frac{1}{n}   \\
    		 & \xrightarrow[n \to \infty]{} \frac{ 1 }{ 2 } x \int_x^1 \frac{1}{u} du            \\
    		 & = \frac{ 1 }{ 2 } x \left. \ln \left( u \right) \right\rvert_x^1               \\
    		 & = - \frac{ 1 }{ 2 } x \ln(x) \,,
    	\end{align*}
        where we used that, given that $v_1$ appears in position $i$, the probability of accepting $v_1$ equals the probability that the item with the largest value among the first $(i - 1)$ items is revealed at a position smaller or equal to $\lceil xn \rceil$. Formally, 
        \[
            \PP( E_1 \mid \sigma^{-1}(1) = i) = \PP( \max \{ v_{\sigma(j)} : j \le \lceil xn \rceil \} = \max \{ v_{\sigma(j)} : j \le (i - 1) \} ) = \frac{\lceil xn \rceil}{i - 1} \,.
        \]
    
    	On the other hand, in the second phase we may accept any of the $n$ items, provided they are larger than the observation period. Therefore,
    	\begin{align*}
    		\PP(E_2)
    		 & = \frac{1}{2} \PP( E_2 \mid \sigma^{-1}(1) > n) \\
             & = \frac{1}{2} \sum_{i = 1}^n \PP( E_2 \mid \sigma^{-1}(1) = n + i) 
                \cdot \PP(\sigma^{-1}(1) = n + i \mid \sigma^{-1}(1) > n)            \\
    		 & = \frac{1}{2} \sum_{i = 1}^n \frac{\lceil xn \rceil}{ \lceil xn \rceil + i - 1 } \cdot \frac{1}{n} \\
    		 & \xrightarrow[n \to \infty]{} \frac{ 1 }{ 2 } \int_0^1 \frac{x}{x + u} du      \\
    		 & = \frac{ 1 }{ 2 } x \left. \ln \left( x + u \right) \right\rvert_0^1       \\
    		 & = - \frac{ 1 }{ 2 } x \ln \left( \frac{x}{x + 1} \right) \,.
    	\end{align*}
    
    	Finally, adding up this probabilities, we have that
    	\begin{align*}
    		\lim_{n \to \infty} \PP(\SOP[x]_n \text{ accepts the maximum value})
    		 & = \lim_{n \to \infty} \PP(E_1) + \PP(E_2)                   \\
    		 & = - \frac{1}{2} x \ln \left( \frac{x^2}{x + 1} \right) \,.
    	\end{align*}
    	This function is strictly concave and standard optimization techniques yield that the maximum is less than $0.450$ at $x^* \approx 0.545$.
    \end{proof}
}

\iftoggle{SODA}
{
    \smallskip
    \noindent\textbf{Treat phases separately.} 
    We analyze}
{
    \subsection{Treat phases separately}

    In this section, we analyze}
algorithms that treat the first $n$ items and the last $n$ items as separate and independent single-selection optimal stopping problems. In particular, the DM does not compare items observed in Phase~2 to items from Phase~1. \Cref{Result: Independent-phases Basic algorithm upperbound} states that these algorithms have a limit performance of at most $1/e \le 0.368$.

\iftoggle{SODA}
{
    \begin{proof}[Sketch proof of \Cref{Result: Independent-phases Basic algorithm upperbound}]
        Consider the upper bound given by \Cref{Result: Choosing the maximum upperbound}. For algorithms that treat phases separately, each phase reduces to the classical Secretary Problem. Indeed, in each phase, the algorithm knows that only picking the maximum value counts and knows that items are shown in random order. By \Cref{Result: Optimal strategy in the Secretary Problem}, the optimal strategy is to observe a portion $1/e$ of the items in each round. For this algorithm, the DM accepts the maximum value with probability $1/e$ in each phase separately and therefore the same bound holds for both phases when put together.
    \end{proof}
}{
    \begin{proof}[Proof of \Cref{Result: Independent-phases Basic algorithm upperbound}]
        By \Cref{Result: Choosing the maximum upperbound}, it is enough to prove that, for algorithms that treat phases separately, the probability of accepting the item with maximum value is bounded by $1/e$. Since there are no consequences in Phase~2 from the decisions taken in phase 1, the DM can optimize her strategy in each phase separately.
        
        In each phase, the strategy that maximizes the probability of accepting the item with the largest value is the strategy that maximizes the probability of accepting the largest value in the respective phase. The only information the DM has available is that values are presented in an online fashion, in a random order and, after an item is revealed, she has access to its relative ranking. This is exactly the setting of Secretary Problem~\cite{ferguson1989WhoSolvedSecretary}. Therefore, the maximum probability of accepting the maximum in each phase is $1/e$. The strategy to achieve this bound consists of observing the first $1/e$ portion of the items and then accepting any record that appears. Since this is the optimal strategy for each phase, the algorithm that maximizes the probability of accepting the item with the largest value repeats this strategy in each phase. Formally, for all algorithms that treat phases separately $\sTPS$,
        \[
            \perf(\sTPS) \le \lim_{n \to \infty} \PP(\TPS_n \text{ accepts } v_1) \le \frac{1}{e} \le 0.368 \,,
        \]
        which proves the claim.
    \end{proof}
}

\iftoggle{SODA}
{
    \vspace{-1em}
    \section{Our Algorithm}
    \label{Section: Our Algorithm}
    \vspace{-0.5em}
}{
    \section{Our Algorithm}
    \label{Section: Our Algorithm}
}

In this section, we present an algorithm whose limit performance is at least $0.495$. Our algorithm extends the work of~\cite{correa2020SampledrivenOptimalStopping} which was designed for the Sample-driven Prophet Inequality problem, where samples are given and do not take part in the optimization process. We start by recalling some notions from~\cite{correa2020SampledrivenOptimalStopping} and explain the issues we need to overcome. Finally, we present our algorithm and analyze its limit performance.

\smallskip
\noindent \emph{Random Sample-driven Prophet Inequality.} Consider the following single selection optimal stopping problems with random arrival order and independent sampling of items. An adversary chooses a list of $n$ items with associated real values $v_1 \ge v_2 \ge \ldots \ge v_{n} \ge 0$. The values are not presented to the DM. In the first phase (sampling phase), each item is selected as a sample independently with a fixed probability $p \in [0, 1]$. Samples are shown to the DM. In the second phase (online phase), the items that were not selected as samples are shown in an online fashion and according to a random order. When an item arrives, the DM does not have access to its value but can compare it with all previously seen items, i.e. the DM has access to its relative ranking. After an item arrives, the DM must make an irrevocable decision to either accept the item (and stop the process) or reject the item forever and proceed to the next item, if any. Since items may have equal values, we assume that there is an arbitrary tie-breaking rule that is consistent with the relative ranks revealed and publicly selected before the process starts. The goal of the DM is to maximize the expectation of the value of the item selected.

\smallskip
\noindent \emph{Main idea of~\cite{correa2020SampledrivenOptimalStopping}.} Based on the linear programming approach of~\cite{buchbinder2014SecretaryProblemsLinear}, and using mass moving arguments from optimal transport, the limit performance of the optimal strategy coincides with the solution of a certain real optimization problem. From the optimization problem, the general form of the optimal strategy is deduced. The optimal strategy proceeds as follows. There is a fixed infinite sequence of thresholds $t_1 \le t_2 \le \ldots$ all in $[0, 1]$ such that the optimal strategy accepts an item with relative ranking $i$ if it appears after a portion $t_i$ of items has been revealed. In particular, the optimal strategy is value-oblivious.

\smallskip
\noindent \emph{Results of~\cite{correa2020SampledrivenOptimalStopping}.} For each $p \in [0, 1]$, computing the limit performance of the optimal strategy, denoted $\alpha(p)$, amounts to solving an optimization problem with infinitely many variables. The numerical value of $\alpha(p)$ can be approximated by solving a pair of finite optimization problems that yield upper and lower bounds respectively. These finite optimization problems are mainly parametrized by the number of thresholds a strategy is allowed to use. Moreover, for some values of $p$, the value $\alpha(p)$ can be given exactly. For $p \le 1/e$, they prove that $\alpha(p) = (e (1 - p) )^{-1}$. Also, $\alpha(1) = \lim_{p \to 1} \alpha(p) \approx 0.745$, the same ratio one obtains in the i.i.d. Prophet Inequality. Another interesting value is $\alpha(1/2) \approx 0.671$, which corresponds to the single selection problem where the DM observes half of the items beforehand.

\smallskip
\noindent \emph{Issues in our setting.} The strategy of~\cite{correa2020SampledrivenOptimalStopping} is optimal for the single selection problem, but in our setting the DM is allowed to accept two items. Interpreting Phase~1 and Phase~2 as independent single selection problems will not lead to an optimal performance. Indeed, during Phase~1, since there are no samples, we would use the strategy of~\cite{correa2020SampledrivenOptimalStopping} for $p = 0$ (which coincides with the solution of the Secretary Problem) during Phase~1, i.e. observe but not accept the first $\lceil n / e \rceil$ elements and then accept any record that appears if any. Then, during Phase~2, interpret observed values as samples and apply the strategy of~\cite{correa2020SampledrivenOptimalStopping} for the corresponding value of $p$. This strategy is sub-optimal because one can increase the observation time to have more samples during Phase~2, see \Cref{Section: Our Algorithm} of Appendix for details. In conclusion, we derive that this is not an optimal strategy: one needs to consider both phases at the same time.

\smallskip
\noindent \emph{Our generalization.} We generalize the strategy of~\cite{correa2020SampledrivenOptimalStopping} and design a simple algorithm for the Repeated Prophet Inequality. For $x \in [0, 1]$, denote the algorithm $\sRPI[x]$ that proceeds as follows. 
\begin{compactitem}
	\item \textbf{Phase~1.} $\RPI[x]_{n}$ observes but does not accepts the first $\lceil xn \rceil$ items and then accepts any record that appears. 
    \item \textbf{Phase~2.} After observing during Phase~1 a total of $t$ items, $\RPI[x]_{n}$ proceeds as the optimal strategy of~\cite{correa2020SampledrivenOptimalStopping} with $p = t / (n + t)$, i.e. it uses a sequence of thresholds $t_1 \le t_2 \le \ldots$ to determine if an item should be accepted based only on its relative ranking and time of appearance.
\end{compactitem}
In particular, $\sRPI[x]$ is value-oblivious. Note that, if the size of the instance $n$ is large enough, then, conditioned on having observed $t$ items during Phase~1, the expectation of the item accepted is approximately $\alpha( t / (n + t))$ times that of the maximum of Phase~2. Therefore, we derive an expression for $\perf(\sRPI[x])$ that only depends on $x$ and $\alpha(\cdot)$. In particular, $\perf(\sRPI[x])$ is easy to evaluate numerically.

\smallskip
\noindent \emph{Overview of the proof.} To compute the limit performance of $\sRPI[x]$, we reduce the analysis to computing the limit performance of Phase~1 and Phase~2 separately. For Phase~1, by \Cref{Result: Acceptance probability of Secretary algorithms}, the limit performance is $-x \ln(x)$. For Phase~2, since we are computing the limit performance of just this phase, the items observed during Phase~1 can be interpreted as samples. Therefore, we can apply the strategy of~\cite{correa2020SampledrivenOptimalStopping}. The limit performance results in a certain expectation of $\alpha(p)$, where the value of $p$ depends on the (random) number of items observed during Phase~1, denoted by $T[x]$. The distribution of $T[x]$ is deduced and allows us to write the limit performance for Phase~2 in terms of $x$ and $\alpha(\cdot)$. Finally, we conclude the lower bound of $0.495$ stated in \Cref{Result: Algorithm lowerbound} by optimizing over $x \in [0, 1]$.

\iftoggle{SODA}
{

}{
    We start by showing how to express the performance of any algorithm in terms of its performance in each phase. Formally,
    \begin{lemma}[Equivalent form of limit performance]
    \label{Result: Equivalent form of limit performance}
        Consider any algorithm $\sALG$ for the Repeated Prophet Inequality. Then,
        \[
            \perf(\sALG) 
                = \liminf_{n \to \infty} \inf_{\bm{v}}
                    \frac{ \EE(\ALG_n^{(1)}({\bm{v}})) }{ 2 \EE \left(\max \left\{ v_{\sigma(i)} : i \in [1 \until n] \right\} \right) } + \frac{ \EE(\ALG_n^{(2)}({\bm{v}})) }{ 2 \EE \left(\max \left\{ v_{\sigma(i)} : i \in [1 \until n] \right\} \right) } \,,
        \]	
        where $\ALG_n^{(1)}$ and $\ALG_n^{(2)}$ denote the value accepted during Phase~1 and~2 respectively.
    \end{lemma}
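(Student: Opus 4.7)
The plan is to start from the definition of $\perf(\sALG)$ and rewrite the ratio inside $\inf_{\bm v}$ by manipulating its numerator and denominator separately. Once both admit the claimed forms, splitting the single fraction into two fractions with a common denominator yields the desired identity; the $\liminf$ and $\inf$ pass through trivially since the rewriting is an equality for every $n$ and every instance $\bm v$.

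First, I would observe that by the definition of Phase~1 and Phase~2, the value obtained by the DM under $\sALG$ is the sum of what she obtains in each phase, i.e.\ $\ALG_n(\bm v) = \ALG_n^{(1)}(\bm v) + \ALG_n^{(2)}(\bm v)$ pointwise on the probability space (with zero contributions for phases in which no item is accepted). By linearity of expectation,
\[
    \EE(\ALG_n(\bm v)) = \EE(\ALG_n^{(1)}(\bm v)) + \EE(\ALG_n^{(2)}(\bm v)) \,.
\]

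Next, I would handle the denominator via the symmetry of the random permutation. Because $\sigma$ is a uniformly random permutation of $[1 \until 2n]$, the unordered set $\{v_{\sigma(i)} : i \in [1 \until n]\}$ and the unordered set $\{v_{\sigma(i)} : i \in [(n+1) \until 2n]\}$ have the same distribution (both are uniformly random $n$-subsets of the multiset $\{v_1, \ldots, v_{2n}\}$, being complements of each other). Consequently their maxima are identically distributed, so
\[
    \EE\!\left( \max\{v_{\sigma(i)} : i \in [1 \until n]\} \right) = \EE\!\left( \max\{v_{\sigma(i)} : i \in [(n+1) \until 2n]\} \right) \,,
\]
and by linearity,
\[
    \EE\!\left( \max\{v_{\sigma(i)} : i \in [1 \until n]\} + \max\{v_{\sigma(i)} : i \in [(n+1) \until 2n]\} \right) = 2\, \EE\!\left( \max\{v_{\sigma(i)} : i \in [1 \until n]\} \right) \,.
\]

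Substituting both identities into the definition of $\perf(\sALG)$ and splitting the resulting fraction into the sum of two fractions with a common denominator gives the claimed expression. No optimization or estimate is required: every step is an equality at the level of the integrand of $\liminf_n \inf_{\bm v}$, so the operators descend unchanged. I expect no real obstacle here; the only subtle point is justifying the symmetry claim for the denominator, for which it is enough to note that the joint distribution of $(v_{\sigma(1)}, \ldots, v_{\sigma(n)})$ equals that of $(v_{\sigma(n+1)}, \ldots, v_{\sigma(2n)})$ after reversing the permutation, a direct consequence of the fact that $\sigma$ and $\sigma \circ \tau$ have the same law for $\tau$ the swap that exchanges the two blocks $[1 \until n]$ and $[(n+1) \until 2n]$.
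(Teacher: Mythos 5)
Your proposal is correct and follows essentially the same route as the paper's own proof: decompose $\ALG_n(\bm v)$ as the sum of the two phase values, use the exchangeability of the two blocks under the uniformly random $\sigma$ to replace the denominator by $2\,\EE\left(\max\{v_{\sigma(i)} : i \in [1 \until n]\}\right)$, and split the fraction, with the $\liminf$ and $\inf$ passing through because every step is an identity. Your extra justification of the symmetry of the two maxima is a welcome bit of added rigor but does not change the argument.
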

    
    \begin{proof}
        Fix an arbitrary algorithm $\sALG$, an instance size $n \in \NN$ and an instance $\bm{v}$. Then, by definition, $\ALG_n({\bm{v}}) = \ALG_n^{(1)}({\bm{v}}) + \ALG_n^{(2)}({\bm{v}})$. Also, since $\sigma$ is a random order, we have that
        \[
            \EE \left(\max \left\{ v_{\sigma(i)} : i \in [1 \until n] \right\} + \max \left\{ v_{\sigma(i)} : i \in [(n + 1) \until 2n] \right\} \right)
                = 2 \EE \left(\max \left\{ v_{\sigma(i)} : i \in [1 \until n] \right\} \right) \,.
        \]
        Finally, we conclude that
        \begin{align*}
            \perf(\sALG) 
                &= \frac{ \EE(\ALG_n({\bm{v}})) }{ \EE \left(\max \left\{ v_{\sigma(i)} : i \in [1 \until n] \right\} + \max \left\{ v_{\sigma(i)} : i \in [(n + 1) \until 2n] \right\} \right) } \\
                &= \frac{ \EE \left( \ALG_n^{(1)}({\bm{v}}) \right) + \EE \left( \ALG_n^{(2)}({\bm{v}}) \right) }{ 2 \EE \left(\max \left\{ v_{\sigma(i)} : i \in [1 \until n] \right\} \right) } \\
                &= \frac{ \EE(\ALG_n^{(1)}({\bm{v}})) }{ 2 \EE \left(\max \left\{ v_{\sigma(i)} : i \in [1 \until n] \right\} \right) } + \frac{ \EE(\ALG_n^{(2)}({\bm{v}})) }{ 2 \EE \left(\max \left\{ v_{\sigma(i)} : i \in [1 \until n] \right\} \right) } \,,
        \end{align*}
        which proves the lemma.
    \end{proof}
    
    We continue by formalizing the distribution of the stopping time during Phase~1 of $\bm{RPI}[x]$, which corresponds to the number of items revealed to the DM before starting Phase~2. Fix $n \in \NN$. Intuitively, the DM observes exactly $t$ items during Phase~1 while using algorithm $\RPI[x]_n$ if the following conditions hold: (1)~the number $t$ is at least $(\lceil xn \rceil + 1)$ since the algorithm discards the first $\lceil xn \rceil$ items; (2)~no record has been revealed after $\lceil xn \rceil$ items were revealed and before $t$ items were revealed, since, otherwise, the algorithm would have stopped; and (3)~item $t$ is a record or $t= n$ since in both cases the algorithm does not observe more items in Phase~1. Formally, we have the following result.
    \begin{lemma}[Distribution of stopping time]
    \label{Result: Distribution of stopping time}
    	Let $x \in [0, 1]$. Fix $n \in \NN$. Denote $T[x]$ the number of items observed during Phase~1 by $\RPI[x]_n$. Then, for all $t \in \{1, 2, \ldots, n \}$,
    	\[
    		\PP(T[x] = t) = \frac{\lceil xn \rceil}{t (t - 1)} \1[t > \lceil xn \rceil] + \frac{\lceil xn \rceil}{n} \1[t = n] \,.
    	\]
    \end{lemma}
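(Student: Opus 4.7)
The plan is to decompose the event $\{T[x] = t\}$ into a disjoint union of elementary events about records of the random permutation $\sigma$, and then evaluate each by exchangeability of $\sigma$.

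During Phase~1, the algorithm $\RPI[x]_n$ passes over the first $\lceil xn \rceil$ items and then accepts the first record it encounters; if no record occurs among positions $\lceil xn \rceil + 1, \ldots, n$, then all $n$ items are observed without acceptance. Hence $T[x]$ takes values only in $\{\lceil xn \rceil + 1, \ldots, n\}$, matching the indicator $\1[t > \lceil xn \rceil]$ in the claim. For $\lceil xn \rceil < t < n$, the event $\{T[x] = t\}$ coincides with the event $A_t$ that position $t$ is a record and no position in $\{\lceil xn \rceil + 1, \ldots, t - 1\}$ is a record. For $t = n$, the event $\{T[x] = n\}$ is the disjoint union of $A_n$ and the event $B$ that no record occurs anywhere in $\{\lceil xn \rceil + 1, \ldots, n\}$, i.e., the case in which Phase~1 ends without an acceptance.

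To compute $\PP(A_t)$ I would rephrase it in terms of the positions of the two largest values among the first $t$ positions: "position $t$ is a record" means the position achieving $\max\{v_{\sigma(1)}, \ldots, v_{\sigma(t)}\}$ is $t$, and "no record in $\{\lceil xn \rceil + 1, \ldots, t - 1\}$" means the position achieving $\max\{v_{\sigma(1)}, \ldots, v_{\sigma(t - 1)}\}$ lies in $\{1, \ldots, \lceil xn \rceil\}$. Under the uniformly random $\sigma$, the joint distribution of the positions of the top two values among $\{1, \ldots, t\}$ is uniform over the $t(t - 1)$ ordered pairs of distinct positions, so counting the $\lceil xn \rceil$ favourable pairs gives $\PP(A_t) = \lceil xn \rceil / (t(t - 1))$. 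Similarly, $B$ is the event that the maximum of $v_{\sigma(1)}, \ldots, v_{\sigma(n)}$ lies in the first $\lceil xn \rceil$ positions, whence $\PP(B) = \lceil xn \rceil / n$. Adding $\PP(A_t)\,\1[t > \lceil xn \rceil]$ and $\PP(B)\,\1[t = n]$ yields the stated formula. The plan carries no serious obstacle; the only delicate point is to remember the "no acceptance" scenario $B$ at $t = n$, which is precisely what produces the extra term $\frac{\lceil xn \rceil}{n}\,\1[t = n]$ and which is easy to overlook if one identifies $T[x]$ solely with the algorithm's stopping position.
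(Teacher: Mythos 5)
Your proposal is correct and follows essentially the same route as the paper: it identifies $\{T[x]=t\}$ with the event that position $t$ is the first record after the observation window (computed via the uniform placement of the top two values among the first $t$ positions, giving $\lceil xn\rceil/(t(t-1))$), and correctly adds the extra no-acceptance event at $t=n$ with probability $\lceil xn\rceil/n$. This matches the paper's decomposition and calculation exactly.
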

    
    \iftoggle{SODA}
    {
        \begin{proof}[Sketch proof]
            Let $x \in [0, 1]$. Fix $n \in \NN$. Then, $\RPI[x]_n$ observes $n$ items if no record is revealed after the first $\lceil xn \rceil$ items were revealed. On the other hand, $\RPI[x]_n$ accepts the item that $\sigma$ assigns position position $t$ if it is a record and $\sigma$ assigns the item with the second maximum value, among the first $t$ items, a position of at most $\lceil xn \rceil$. 
        \end{proof}
    }{
        \begin{proof}
            Consider $t \in [( \lceil xn \rceil + 1) \until (n - 1)]$. Then, $T[x] = t$ occurs only if the DM has observed a record for the first time after observing $\lceil xn \rceil$ items and that record is revealed at position $i$. Therefore, since $\sigma$ is uniformly random,
        	\begin{align*}
                \PP(T[x] = t)
                    &= \PP(v_{\sigma(t)} > \max \{ v_{\sigma(i)} : i < t \}, \max \{ v_{\sigma(i)} : i \le \lceil xn \rceil \} = \max \{ v_{\sigma(i)} : i < t \} ) \\
                    & = \frac{1}{t} \frac{\lceil xn \rceil}{t - 1} \,.
        	\end{align*}
        
        	On the other hand, $T[x] = n$ can also occur when $\sigma$ assigns the item with maximum value in Phase~1 as one of the first $\lceil xn \rceil$ items. In this case, the DM never encounters a record after the first $\lceil xn \rceil$ items. Therefore, we must also consider this probability for the case $t = n$. Formally,
            \[
                \PP(T[x] = n) = \frac{\lceil xn \rceil}{n (n - 1)} + \frac{\lceil xn \rceil}{n} \,,
            \]
            which proves the lemma.
        \end{proof}
    }
    
    We turn to compute the limit performance of $\sRPI[x]$.
    
    \begin{lemma}[{Limit performance of $\sRPI[x]$}]
    \label{Result: Limit performance of RPI[x]}
        Consider the Repeated Prophet Inequality. Let $x \in [0, 1]$. Then,
        \[
            \perf(\sRPI[x]) 
                = \frac{x}{2} \left( -\ln(x) + \int_{x}^{1} \alpha \left( \frac{u}{1 + u} \right) \frac{1}{u^2} du + \alpha \left( \frac{1}{2} \right) \right) \,.
        \] 
    \end{lemma}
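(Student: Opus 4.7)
}]
The plan is to decompose $\perf(\sRPI[x])$ by phase, evaluate each phase in the limit, and recombine. By \Cref{Result: Equivalent form of limit performance}, it suffices to compute separately
\[
    L_1(x) \defas \liminf_{n \to \infty} \inf_{\bm v} \frac{\EE(\RPI[x]_n^{(1)}(\bm v))}{2\,\EE(\max\{v_{\sigma(i)} : i \in [1 \until n]\})}
    \quad\text{and}\quad
    L_2(x) \defas \liminf_{n \to \infty} \inf_{\bm v} \frac{\EE(\RPI[x]_n^{(2)}(\bm v))}{2\,\EE(\max\{v_{\sigma(i)} : i \in [1 \until n]\})} \,.
\]

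\textbf{Phase~1.} During Phase~1, $\RPI[x]_n$ is exactly the Secretary strategy $\SEC[x]_n$ restricted to the first $n$ items (which are themselves a uniformly random subset). Since the ratio to $\EE(\max\{v_{\sigma(i)} : i \in [1 \until n]\})$ is minimized on the extremal instance $(1, 0, \ldots, 0)$, where only acceptance of the maximum contributes, I would use \Cref{Result: Acceptance probability of Secretary algorithms} to obtain $2 L_1(x) = -x \ln(x)$.

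\textbf{Phase~2.} Condition on the number $T[x] = t$ of items observed in Phase~1, whose distribution is given by \Cref{Result: Distribution of stopping time}. Conditioned on $T[x] = t$, the Phase~2 process is exactly a Random Sample-driven Prophet Inequality with $t$ samples and $n$ online items, where each of the $n+t$ items is a sample independently with probability $p = t/(n+t)$; by the results of~\cite{correa2020SampledrivenOptimalStopping} recalled in the section, the strategy $\RPI[x]_n$ applies on Phase~2 is asymptotically optimal for this sub-problem and its worst-case ratio (against $\EE(\max\{v_{\sigma(i)} : i \in [(n+1) \until 2n]\}) = \EE(\max\{v_{\sigma(i)} : i \in [1 \until n]\})$) tends to $\alpha(t/(n+t))$. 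Summing over $t$ and using \Cref{Result: Distribution of stopping time},
\[
    2 L_2(x) = \lim_{n \to \infty} \left( \sum_{t = \lceil x n \rceil + 1}^{n-1} \frac{\lceil x n \rceil}{t(t-1)}\, \alpha\!\left( \frac{t}{n+t} \right) + \left( \frac{\lceil x n \rceil}{n(n-1)} + \frac{\lceil x n \rceil}{n} \right) \alpha\!\left( \tfrac{1}{2} \right) \right) .
\]
Recognising the sum as a Riemann sum in $u = t/n$ with mesh $1/n$, I would pass to the limit to obtain $2 L_2(x) = x \int_x^1 \alpha(u/(1+u))\, u^{-2}\, du + x\, \alpha(1/2)$. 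Adding $2 L_1(x)$ and dividing by $2$ yields the displayed expression.

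\textbf{Main obstacle.} The delicate step is justifying the passage to the limit inside the sum defining $2 L_2(x)$ together with the uniform-in-$\bm v$ worst-case ratio. Concretely, for each $t$ one needs that the worst-case ratio of Phase~2 conditioned on $T[x]=t$ converges to $\alpha(t/(n+t))$ as $n \to \infty$, and that the convergence is uniform enough (or suitably dominated, using $0 \le \alpha(\cdot) \le 1$) to interchange the limit with the sum over $t$. I would handle this by invoking the asymptotic optimality statement of~\cite{correa2020SampledrivenOptimalStopping} in the regime $t, n \to \infty$ with $t/(n+t) \to p$, plus a bounded-convergence argument using $\alpha \le 1$ and the fact that $\sum_t \PP(T[x]=t) = 1$; the small-$t$ tail (values of $t$ close to $\lceil x n \rceil$) contributes negligibly because $\PP(T[x]=t)$ is summable and $\alpha$ is bounded. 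The identification of $T[x]$ with the effective sample size in the Sample-driven model relies on the fact that, conditioned on $T[x]=t$, the $n$ Phase~2 items together with the $t$ observed items form a uniformly random split of a uniformly ordered set of $n+t$ items, which I would verify by a direct symmetry argument on $\sigma$.
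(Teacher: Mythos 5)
Your proposal is correct and follows essentially the same route as the paper: it decomposes the performance via \Cref{Result: Equivalent form of limit performance}, handles Phase~1 with \Cref{Result: Acceptance probability of Secretary algorithms} on the extremal instance, and handles Phase~2 by conditioning on $T[x]=t$ via \Cref{Result: Distribution of stopping time}, identifying the conditional subproblem with the sample-driven model of~\cite{correa2020SampledrivenOptimalStopping} and passing to the Riemann-sum limit. The limit-interchange and symmetry issues you flag as the main obstacle are real but are treated equally informally in the paper's own proof, so nothing is missing relative to it.
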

    
    \iftoggle{SODA}
    {
        \begin{proof}[Sketch proof]
            Fix $x \in [0, 1]$. Let $n \in \NN$. Then, based on \Cref{Result: Equivalent form of limit performance}, we can analyze each phase in isolation. For Phase~1, by \Cref{Result: Acceptance probability of Secretary algorithms}, the algorithm $\RPI[x]_n$ accepts the item with maximum value in Phase~1 with probability at least $-x \ln(x)$. Since it does so in a value-oblivious manner, its expected value is at least $-x \ln(x)$ times $\EE \left(\max \left\{ v_{\sigma(i)} : i \in [1 \until n] \right\} \right)$. For Phase~2, we condition on the number of items observed during Phase~1, say $t$ items. Then, $\RPI[x]_n$ acts a the strategy of~\cite{correa2020SampledrivenOptimalStopping} where there are $t$ samples and $(n + t)$ total items. Fixing the proportion $u \defas t / n$, we have that the performance during Phase~2 converges to $\alpha(p = u / (1 + u))$. By \Cref{Result: Distribution of stopping time} and taking the limit as $n$ grows to $\infty$, we obtain the corresponding integral and the factor of $\alpha(1/2)$ (which corresponds to the case when the DM observed $n$ items during Phase~1). 
        \end{proof}
    }{
        \begin{proof}
            Fix $x \in [0, 1]$. Let $n \in \NN$ and consider an instance $\bm{v}$ of $2n$ items. During Phase~1, $\RPI[x]_n$ observes the first $\lceil xn \rceil$ elements and then chooses the first record that appears. Therefore, the probability of accepting the item with the maximum value between those in Phase~1 is equal to $-x \ln(x)$~\cite{gilbert1966RecognizingMaximumSequence}. Formally,
            \[
                \PP(\RPI[x]_n^{(1)}(\bm{v}) = \max \{ v_{\sigma(i)} : i \in [1 \until n] \}) \ge - x \ln(x) \,.
            \]
            Moreover, since $\RPI[x]_n$ is value-oblivious this inequality is translated into an expectation inequality. Formally,
            \[
                \EE(\RPI[x]_n^{(1)}(\bm{v})) \ge -x \ln(x) \EE(\max \{ v_{\sigma(i)} : i \in [1 \until n] \})) \,.
            \]
            This inequality is tight when we consider particular instances $\bm{v}$. 
            
            During Phase~2, by conditioning on the items observered during Phase~1, we have that
            \begin{align}
            \label{Eq: Second round decomposition}
                \begin{aligned}
                    \EE \left( \RPI_n^{(2)}[x] (\bm{v}) \right) 
                        &= \sum_{t = \lceil xn \rceil + 1}^{n} \EE \left( \RPI_n^{(2)}[x] (\bm{v}) \mid T[x] = t \right) \PP( T[x] = t ) \\
                        &= \sum_{t = \lceil xn \rceil + 1}^{n - 1} \EE \left( \RPI_n^{(2)}[x] (\bm{v}) \mid T[x] = t \right) \PP( T[x] = t ) \\
                        &\quad\quad + \EE \left( \RPI_n^{(2)}[x] (\bm{v}) \mid T[x] = n \right) \PP( T[x] = n ) \,.
                \end{aligned} 
            \end{align}
        
            Fixing $t/n = u \in [0, 1]$, we have that $t / (n + t)$ converges to $u / (1 + u)$. In particular, the items revealed during Phase~1 amount to a portion $u / (1 + u)$ of the total number of items if one were to consider Phase~2 as a single selection problem where Phase~1 consists of only samples. Therefore, 
            \[
                \lim_{n \to \infty} \frac{ \EE \left( \RPI_n^{(2)}[x] (\bm{v}) \mid T[x] = t \right) }{ \EE \left(\max \left\{ v_{\sigma(i)} : i \in [1 \until n] \right\} \right) }
                    = \alpha \left( p = \frac{u}{1 + u} \right) \,.
            \]
            In particular, this limit is indpendent of $\bm{v}$. This is expected since the algorithm is value-oblivious. 
        
            Then, replacing the value of the stopping time probabilities given by \Cref{Result: Distribution of stopping time} in \eqref{Eq: Second round decomposition}, we get that
            \begin{align*}
                \frac{ \EE \left( \RPI_n^{(2)}[x] (\bm{v}) \right) }{ \EE \left(\max \left\{ v_{\sigma(i)} : i \in [1 \until n] \right\} \right) } 
                    &= \sum_{t = \lceil xn \rceil + 1}^{n - 1} 
                        \frac{ \EE \left( \RPI_n^{(2)}[x] (\bm{v}) \mid T[x] = t \right) }{ \EE \left(\max \left\{ v_{\sigma(i)} : i \in [1 \until n] \right\} \right) }
                        \frac{\lceil xn \rceil}{t (t - 1)} \\
                        &\quad\quad + 
                        \frac{ \EE \left( \RPI_n^{(2)}[x] (\bm{v}) \mid T[x] = n \right) }{ \EE \left(\max \left\{ v_{\sigma(i)} : i \in [1 \until n] \right\} \right) }
                        \left( \frac{\lceil xn \rceil}{n (n - 1)} + \frac{\lceil xn \rceil}{n} \right) \\
                    &\xrightarrow[n \to \infty]{} \int_{ x }^{ 1 } \alpha \left( p = \frac{u}{1 + u} \right) \frac{x}{u^2} du + \alpha \left( p = \frac{1}{2} \right) x \,.
            \end{align*}
        
            Finally, by \Cref{Result: Equivalent form of limit performance}, we conclude that
            \begin{align*}
                \perf(\sRPI[x])
                    &= \liminf_{n \to \infty} \inf_{\bm{v}}
                        \frac{ \EE(\RPI_n^{(1)}({\bm{v}})) }{ 2 \EE \left(\max \left\{ v_{\sigma(i)} : i \in [1 \until n] \right\} \right) } 
                        + \frac{ \EE(\RPI_n^{(2)}({\bm{v}})) }{ 2 \EE \left(\max \left\{ v_{\sigma(i)} : i \in [1 \until n] \right\} \right) } \\
                    &\ge \liminf_{n \to \infty} \inf_{\bm{v}} - \frac{x}{2} \ln(x) + \frac{ \EE(\RPI_n^{(2)}({\bm{v}})) }{ 2 \EE \left(\max \left\{ v_{\sigma(i)} : i \in [1 \until n] \right\} \right) } \\
                    &= \frac{x}{2} \left( -\ln(x) + \int_{x}^{1} \alpha \left( \frac{u}{1 + u} \right) \frac{1}{u^2} du + \alpha \left( \frac{1}{2} \right) \right) \,,
            \end{align*}
            which proves the result.
        \end{proof}
    }
    
    Finally, optimizing over $x$, we derive the lower bound that our algorithm implies.
    
    \begin{proof}[Proof of \Cref{Result: Algorithm lowerbound}]
        By \Cref{Result: Limit performance of RPI[x]}, all we need to show is that there exists $x^* \in [0, 1]$ such that $\perf(\sRPI[x^*]) \ge 0.495$. Consider a lower bound $\underline{\alpha}$ for $\alpha$. Then,
        \[
            \perf(\sRPI[x]) \ge \frac{x}{2} \left( -\ln(x) + \int_{x}^{1} \underline{\alpha} \left( \frac{u}{1 + u} \right) \frac{1}{u^2} du + \underline{\alpha} \left( \frac{1}{2} \right) \right) \,.
        \]
        In particular, since $\alpha$ is an increasing function, we can take the following expression for $\underline{\alpha}$.
        \[
            \underline{\alpha}(p) = \sum_{k = 1}^K \alpha(p_{k - 1}) \1[p \in (p_{k - 1}, p_k]] \,,
        \]
        where $p_0 = 0 < p_1 < p_2 < \ldots < p_{K} = 1$. The values $(\alpha(p_{k - 1}))_{k \in [1 \until K]}$ can be approximated from below by following the procedure of~\cite[Appendix B.10]{correa2020SampledrivenOptimalStopping}. To compute a lower bound for $\alpha(p)$, we only need to solve a linear program. The more variables has this linear program, the better the approximation of $\alpha(p)$. Therefore, we can have a procedure to give an arbitrary approximation of $\perf(\sRPI[x])$ by increasing the number $K$ and the number of variables in the related linear program for each $p_k \in [0, 1]$.
    
        In particular, taking $K = 4000$ and $p_k = k / 4000$, we can consider the bounds given in~\cite[Appendix B.10]{correa2020SampledrivenOptimalStopping} for the corresponding $\alpha(p_k)$. With these values, for $x^* \approx 0.457$ we get that $\perf(\sRPI[0.441]) \ge 0.495$, which proves the result.
    \end{proof}
}

\iftoggle{SODA}
{
    \vspace{-1em}
    \section{Upper Bound}
    \label{Section: Upper bound}
    \vspace{-0.5em}
}{
    \section{Upper Bound}
    \label{Section: Upper bound}
}

In this section, we present an upper bound for the limit performance of any algorithm for the value-observed variant of the Repeated Prophet Inequality. \Cref{Result: Algorithms upperbound} states that all algorithms have a limit performance of at most $0.502$. 

\smallskip
\noindent \emph{Overview of the proof.} 
The proof is based on analyzing the Repeated Secretary Problem. Considering this new goal for the DM, we characterize the optimal algorithm and compute its performance. Our characterization of the optimal algorithm follows the spirit of~\cite{gilbert1966RecognizingMaximumSequence}. To conclude, \Cref{Result: Choosing the maximum upperbound} shows that the limit performance of any algorithm for the Repeated Prophet Inequality is bounded by its probability of accepting the item with maximum value. Therefore, the performance of the optimal algorithm for the Repeated Secretary Problem is an upper bound for the Repeated Prophet Inequality.

Before we prove \Cref{Result: Algorithms upperbound}, let us argue why it does not follow immediately from classical results.

\smallskip
\noindent \emph{Classical $k$-choices Secretary Problem.} Consider the following optimal stopping problem. An adversary chooses a list of $n$ items with associated real values $v_1 \ge v_2 \ge \ldots \ge v_{n} \ge 0$ and nature chooses a random permutation $\sigma \colon [1 \until n] \to [1 \until n]$. Then, items are presented to the DM one by one according to $\sigma$, i.e., $v_{\sigma(1)}, \allowbreak v_{\sigma(2)}, \allowbreak \ldots, \allowbreak v_{\sigma(n)}$. When the $i$-th item arrives, the DM does not have access to the numerical value $v_{\sigma(i)}$ but can compare it with the previous items, i.e. $v_{\sigma(1)}, v_{\sigma(2)}, \ldots, v_{\sigma(i - 1)}$. In other words, she knows its relative ranking within the items observed so far. After the arrival of the $i$-th item, the DM must make an irrevocable decision to either accept the item or reject the item forever and proceed to the next item, if any. When accepting an item, its value is not revealed to the DM.
Since items may have equal values, we assume that there is an arbitrary tie-breaking rule that is consistent with the relative ranks revealed and publicly selected before the process starts.
Her goal is to maximize the probability of choosing the item with the largest value while accepting only $k$ items. 

\smallskip
\noindent \emph{Characterization of~\cite{gilbert1966RecognizingMaximumSequence}.} 
The optimal algorithm for the classical $k$-choice Secretary problem is described in~\cite{gilbert1966RecognizingMaximumSequence}. For the $1$-choice variant of the problem, i.e. when the DM can only accept one item, the optimal strategy is to observe a fixed portion of the items (asymptotically $1/e$) and then accept a record if it appears. For this strategy, the probability of accepting $v_1$ converges to $1/e \approx 0.368$ as the number of items goes to infinity. For the $2$-choices variant of the problem, i.e. when the DM can only accept two items, the optimal strategy proceeds as follows. First, it observes a fixed portion of the items (asymptotically $1 / e^{3/2}$) and then accepts a record if it appears. Second, after accepting an item for the first time, if the portion of revealed items has not reached another fixed portion (asymptotically $1/e$), then items are observed but not accepted until this portion of items has been revealed. Finally, it accepts a record if it appears. For this strategy the probability of accepting $v_1$ converges to $1 / e^{3/2} + 1/e \approx 0.591$ as the number of items goes to infinity.

\smallskip
\noindent \emph{Issues of~\cite{gilbert1966RecognizingMaximumSequence} in our setting.} There is a fundamental difference between the classical $2$-choices Secretary Problem and the Repeated Secretary Problem. In the classical problem, all items are revealed to the DM independently of her strategy. In contrast, in our setting, accepting an item in Phase~1 constrains the items that will be revealed to the DM. In particular, after accepting an item during Phase~1, the rest of the items in Phase~1 are not revealed to the DM. But there is a more important issue with applying the conclusions of~\cite{gilbert1966RecognizingMaximumSequence} in our setting.
Even if all items were revealed in the Repeated Secretary Problem, the DM can accept an item for the second time only during Phase~2. In particular, the DM could not accept a second item before observing half of the total number of items. Recall that the optimal strategy of~\cite{gilbert1966RecognizingMaximumSequence} indicates that a second record should be chosen as soon as a portion of $1/e$ of the items has been revealed. Therefore, the optimal strategy of~\cite{gilbert1966RecognizingMaximumSequence} can not be implemented in the Repeated Secretary Problem.

\iftoggle{SODA}
{

}{
    \smallskip
    \noindent \emph{Modern sample-driven Secretary Problem.} In modern sample-driven Secretary Problem, an adversary chooses a sequence of $n$ items with values $v_1, \ldots, v_n$. Then, the process follows in two phases. During the first phase (sampling phase), some items are revealed to the DM, but she can never accept them. During the second phase (online phase), the other items are revealed to the DM in a random order and online fashion. For each item, the DM must make an irrevocable decision to either accept the item and stop the process or reject the item forever and proceed to the next item. The goal of the DM is to accept $v_1$. The work of~\cite{kaplan2019CompetitiveAnalysisSample} presents an upper bound for all algorithms when the number of items revealed during the sampling phase is fixed. The work of~\cite{correa2021SecretaryProblemIndependent} presents an optimal algorithm when the items revealed during the sampling phase are randomly chosen according to a fixed probability. 
    
    \smallskip
    \noindent \emph{Issues of modern sample-driven Secretary Problem.} One attempt to analyze Phase~2 of the Repeated Secretary Problem is to model it as a sample-driven Secretary Problem, considering the items revealed during Phase~1 as samples. But this approach is doomed to fail since the goal of the DM is different from the sample-driven Secretary Problem. Indeed, in the repeated Secretary Problem, the goal of the DM is to accept the item with the largest value. In particular, during Phase~2, the DM should only accept items whose value is larger than any item revealed so far, even when compared with items from Phase~1. In contrast, in the sample-driven Secretary Problem, the goal of the DM is to accept the item with the largest value only among the items in the online phase. That is why this approach can not work. As a concrete example, consider the Repeated Secretary Problem and assume that: (1)~the DM has not accepted any item during Phase~2 and the last item $v_{\sigma(2n)}$ is revealed to her; (2)~the last item $v_{\sigma(2n)}$ has the largest value among all the items in Phase~2; and (3)~an item revealed during Phase~1 has a value larger than $v_{\sigma(2n)}$. Following the interpretation of Phase~2 as a sample-driven Secretary Problem, the DM achieves her goal by accepting the last item revealed, as it is the item with the largest value in Phase~2 (interpreted as the online phase). But this is not true for the Repeated Secretary Problem: $v_{\sigma(2n)}$ is not the largest possible value and therefore the DM has no incentives to accept this item. In conclusion, it is incorrect to interpret Phase~2 of the Repeated Secretary Problem as a sample-driven Secretary Problem.
}

In the rest of this section, we consider the following (value-oblivious) algorithms.

\smallskip
\noindent \emph{Waiting Algorithm.} For $x \in [0, 1]$, denote $\sWAI[x]$ the algorithm that proceeds for an instance of size $2n$ as follows. During Phase~1, $\WAI[x]_{n}$ observes but does not accept the first $\lceil xn \rceil$ items and then accepts any record that appears. After observing during Phase~1 a total of $t$ items, during Phase~2, $\WAI[x]_{n}$ observes but does not accept items until $\lceil (n + t) / e \rceil$ items have been observed in total. Then, it accepts any record that appears.

In \Cref{Section: Characterization of the optimal algorithm}, we characterize the optimal algorithm when the DM maximizes the probability of choosing the item with the largest value as a Waiting algorithm. In \Cref{Section: Computation of the upper bound}, for any Waiting algorithm, we compute its probability of choosing the items with the largest value and optimize to derive the numerical upper bound of $0.502$, proving \Cref{Result: Algorithms upperbound}.

\subsection{Characterization of the optimal algorithm}
\label{Section: Characterization of the optimal algorithm}

In this section, we characterize the optimal algorithm for the Repeated Secretary Problem, where the goal of the DM is to maximize the probability of choosing the item with the largest value. The optimal algorithm proceeds as follows. In Phase~1, a fixed portion of the items is observed but not accepted and then a record is accepted if it appears. In Phase~2, depending on how many items were revealed to the DM in Phase~1, more items may be observed but not accepted and then a record is accepted if it appears. Formally, we have the following result.

\begin{lemma}[Characterization of the optimal algorithm]
\label{Result: Characterization of the optimal algorithm}
    There exists $x^* \in [0, 1]$ such that $\sWAI[x^*]$ is optimal for the Repeated Secretary Problem. Formally,
    \[
        \sup_{\sALG} \liminf_{n \to \infty} \PP( \ALG_{n} \text{ accepts } v_1 ) 
            = \max_{x \in [0, 1]} \liminf_{n \to \infty} \PP( \WAI[x]_{n} \text{ accepts } v_1 ) \,.
    \]
\end{lemma}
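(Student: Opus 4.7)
The plan is, in the spirit of Gilbert-Mosteller~\cite{gilbert1966RecognizingMaximumSequence}, to first reduce to algorithms that accept only overall records and then characterize the optimal policy via backward induction in each phase, showing that it must coincide asymptotically with $\sWAI[x^*]$ for some $x^* \in [0, 1]$. A preliminary observation is that an optimal algorithm accepts only \emph{overall records} (items strictly larger than every previously observed item, across both phases): accepting a non-record has zero probability of being $v_1$ and is strictly dominated. Since values are never revealed, the algorithm is value-oblivious and is fully specified by its decision rule at each record.

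For Phase~2, I condition on $T \in [1 \until n]$, the number of items observed in Phase~1, and argue by backward induction. At Phase~2 position $i$, the probability that the current item is an overall record is $1/(T+i)$ by symmetry over the $T+i$ observed items, and, conditional on being a record, it equals $v_1$ with probability $(T+i)/(2n)$. Writing the Phase~2 value function $V_i(T)$, one obtains the recursion
\[
    V_i(T) = \frac{1}{T+i} \max\!\left\{\frac{T+i}{2n},\, V_{i+1}(T)\right\} + \left(1 - \frac{1}{T+i}\right) V_{i+1}(T) \,,
\]
with $V_{n+1}(T) = 0$. Standard arguments yield a threshold rule: accept any record once the total observed count reaches $k^*(T)$. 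Passing to the continuous limit, $V$ solves a first-order linear ODE whose explicit solution gives $k^*(T) \sim (T+n)/e$ as $n \to \infty$, exactly matching the Phase~2 rule of $\sWAI[x]$.

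For Phase~1, I fix the Phase~2 strategy above. At Phase~1 position $j$ with a current record, let $A(j) = j/(2n) + R(j)$ denote the value of accepting, where $j/(2n)$ is the probability that the record is $v_1$ and $R(j)$ is the residual probability that $v_1$ lies in Phase~2 and is subsequently accepted (with Phase~2 operating from $T = j$ samples); and let $U_{j+1}$ denote the continuation value. Accepting is optimal iff $A(j) \ge U_{j+1}$. To conclude the Waiting structure it suffices to show that $A(j) - U_{j+1}$ is non-decreasing in $j$: the term $j/(2n)$ is clearly increasing; $U_{j+1}$ is non-increasing as fewer items remain; and $R(j)$ is weakly non-decreasing because a larger Phase~1 sample gives Phase~2 strictly more information. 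This identifies a unique threshold $j^*$ with accept iff $j \ge j^*$, yielding $\sWAI[x^*]$ for $x^* = \lim_n j^*/n$.

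The main technical obstacle is the monotonicity of $R(j)$. As $j$ grows, the Phase~2 acceptance threshold $(n+j)/e$ shifts up (shrinking the acceptance window), while the conditional distribution of $v_1$'s position given the Phase~1 record at $j$ concentrates on later positions (which helps Phase~2). These effects point in opposite directions, so the net monotonicity requires argument. I would handle this by explicit computation: expressing $R(j)$ as a sum over the position of $v_1$ in Phase~2, in the style of the stopping-time analysis of \Cref{Result: Distribution of stopping time}, and verifying $R(j+1) \ge R(j)$ termwise. An alternative is a coupling argument aligning the $T = j$ and $T = j+1$ Phase~2 strategies on the same realization of $\sigma$, showing that any acceptance of $v_1$ by the former is preserved by the latter.
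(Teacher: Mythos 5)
Your proposal follows essentially the same route as the paper: restrict attention to overall records, establish a ``choose late records'' threshold structure in each phase, identify the Phase~2 acceptance threshold as $\max(t, \lceil (n+t)/e \rceil)$ (you derive it by explicit backward induction where the paper instead cites the known optimal Secretary strategy under a forced observation period), and thereby reduce the problem to the one-parameter family $\sWAI[x]$ together with a continuity/compactness argument for the optimal $x^*$. The one step you flag as the technical obstacle---monotonicity in $j$ of the Phase~2 residual $R(j)$---is exactly the assertion the paper also states without detailed justification (``the probability to accept $v_1$ in Phase~2 when $t$ items have been revealed during Phase~1 is also an increasing function of $t$''), and the explicit computation you outline does confirm it in the limit, so your write-up is, if anything, more candid than the paper about where the real work lies.
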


\iftoggle{SODA}
{
    \begin{proof}[Sketch proof]
        Following the spirit of the proof of \Cref{Result: Acceptance probability of Secretary algorithms} in~\cite{gilbert1966RecognizingMaximumSequence}, we characterize the optimal algorithm by a series of observations that the algorithm needs to satisfy. First, we show that it must accept only records. Then, during Phase~1, if a record would be chosen at position $t$, then a record would be chosen at any later position. Similarly, during Phase~2, after observing the same number of items during Phase~1, if a record would be chosen at position $j$, then a record would be chosen at any later position. Lastly, we deduce the optimal observation period for Phase~2 based on \Cref{Result: Optimal strategy in the Secretary Problem}.
    \end{proof}
}{
    \begin{proof}
        We characterize the optimal algorithm by a series of observations. Recall that $v_1$ is the item with the largest value, the only item the DM is interested in.
        \begin{enumerate}
            \item \emph{Choose only records.} The optimal algorithm accepts only records, i.e. items with the largest value among the items revealed so far. Indeed, items which are not records are necessarily different from $v_1$. Moreover, there is always a positive probability that the last item revealed is $v_1$, formally, $\PP(\sigma^{-1}(1) = 2n) = (2n)^{-1} > 0$. Therefore, the optimal algorithm accepts only records.
    
            \item \emph{Choose late records in Phase~1.} During Phase~1, if the optimal algorithm accepts a record that appears in position $t$, then it accepts a record that appears in any later position. Indeed, the probability that a record at position $t$ is $v_1$ is equal to $t / 2n$, an increasing function of $t$. Moreover, the probability to accept $v_1$ in Phase~2 when $t$ items have been revealed during Phase~1 is also an increasing function of $t$. Therefore, if the optimal algorithm accepts a record at position $t$ during Phase~1, it also accepts a record at any later position. 
    
            \item \emph{Choose late records in Phase~2.} Fix the number of items revealed during Phase~1. Then, during Phase~2, if the optimal algorithm accepts a record that appears in position $j$, the optimal algorithm accepts a record that appears in any later position. Indeed, assume that the number of items revealed during Phase~1 is $t$. Then, the probability that a record at position $j$ is $v_1$ is equal to $(t + j) / 2n$, an increasing function of $j$. Therefore, after fixing the number of revealed items during Phase~1, if the optimal algorithm accepts a record at position $j$ during Phase~2, then it also accepts a record at any later position.
    
            \item \emph{Optimal observation in Phase~2.} Fix the number of items revealed during Phase~1 to $t \in [1 \until n]$. Then, during Phase~2, the optimal algorithm accepts a record that appears in position $j$ if and only if $(t + j) > \lceil (n + t) / e \rceil$. Indeed, there are $(n + t)$ items in total and the first $t$ items can not be accepted by the DM (since they belong to Phase~1). Therefore, by \Cref{Result: Optimal strategy in the Secretary Problem}, using $\rho = t / (n + t)$, a asymptotically optimal strategy accepts any record after $\max( t, \lceil (n + t) / e \rceil )$ items has been revealed. In other words, during Phase~2, a record that appears in position $j$ is accepted by the optimal algorithm if and only if $(t + j) > \lceil (n + t) / e \rceil$.
        \end{enumerate}
    
        Putting all previous observations together, we have characterized the optimal algorithm up to the first portion of observed items $x \in [0, 1]$. Formally, define the function $f\colon [0, 1] \to [0, 1]$ such that, for all $x \in [0, 1]$, 
        \[
            f(x) \defas \liminf_{n \to \infty} \PP( \WAI[x]_{n} \text{ accepts } v_1 ) \,.
        \]
        Then, we have that
        \[
            \sup_{\sALG} \liminf_{n \to \infty} \PP( \ALG_{n} \text{ accepts } v_1 ) 
                = \sup_{x \in [0, 1]} f(x) \,.
        \]
        
        To finish the proof, note that $f$ is continuous. Therefore, there exists a point in which it achieves its supremum, i.e. there exists $x^* \in [0, 1]$ such that $\sup_{x \in [0, 1]} f(x) = f(x^*)$. 
    \end{proof}
}

\begin{Remark}
    By \Cref{Result: Characterization of the optimal algorithm}, the optimal algorithm for the Repeated Secretary Problem is value-oblivious. Therefore, unsurprisingly, the value-oblivious and value-observed variants of the Repeated Secretary Problem are equivalent.
\end{Remark}

\subsection{Computation of the upper bound}
\label{Section: Computation of the upper bound}

In this section, we compute the optimal limit probability of accepting the item with the largest value in the Repeated Secretary Problem. Then, we prove \Cref{Result: Algorithms upperbound} which states that the limit performance of any algorithm for the Repeated Prophet Inequality is at most $0.502$. Based on the characterization of the optimal algorithm given by \Cref{Result: Characterization of the optimal algorithm}, we only need to look at a uniparametric family of algorithms. We derive the exact limit probability for each one of these algorithms. Then, using standard optimization techniques, we deduce that the limit probability of the optimal algorithm for the Repeated Secretary problem is at most $0.502$. Finally, by \Cref{Result: Choosing the maximum upperbound}, we conclude that $0.502$ is also an upper bound for the Repeated Prophet Inequality.

We start by computing the limit probability for each algorithm in the uniparametric family.
\begin{lemma}[Limit probability of waiting algorithms]
\label{Result: Limit probability of algorithms}
    For all $x \in [0, 1]$, we have that
    \begin{align*}
        \lim_{n \to \infty} &\PP( \WAI[x]_{n} \text{ accepts } v_1 ) \\ 
            &= - \frac{x}{2} \ln(x) + \frac{x}{2} \left( \int_{ \min(x, (e - 1)^{-1}) }^{(e - 1)^{-1}} \frac{ u + 1 }{ e u^2 } du 
                + \int_{ \max(x, (e - 1)^{-1}) }^{1} \frac{1}{u} \ln \left( 1 + \frac{1}{u} \right) du \right) + \frac{x}{2} \ln(2) \,.
    \end{align*}
\end{lemma}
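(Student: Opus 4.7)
\emph{Proof proposal.}
The plan is to decompose $\PP(\WAI[x]_n \text{ accepts } v_1)$ by the location of $v_1$, which falls in each phase with probability $1/2$. If $v_1 \in$ Phase~1, the algorithm acts as a pure Secretary strategy with observation fraction $x$, so by \Cref{Result: Acceptance probability of Secretary algorithms} this contribution converges to $-\tfrac{x}{2}\ln(x)$, matching the first term of the claim.

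Suppose instead $\sigma^{-1}(1) = n + j$, so that $v_1$ sits at internal position $j$ of Phase~2. I condition further on $T \defas T[x]$, the number of items the algorithm observes in Phase~1. A preliminary observation is that $T$ depends only on the relative ranks within Phase~1, so its distribution is unchanged by conditioning on $\sigma^{-1}(1) = n+j$, and \Cref{Result: Distribution of stopping time} applies verbatim. Setting $w(t) \defas \max\{0, \lceil (n+t)/e\rceil - t\}$ for the Phase~2 waiting length, the event ``$v_1$ accepted in Phase~2 at position $j$'' is equivalent to: (i)~$j > w(t)$, and (ii)~the global maximum of the items at the observed positions $\{1,\ldots,T\} \cup \{n+1,\ldots,n+j-1\}$ does not lie in the accepting block $\{n+w(t)+1,\ldots,n+j-1\}$.

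The main step, and the one I expect to require the most care, is a combinatorial symmetry: conditional on $T=t$ and $\sigma^{-1}(1) = n+j$, (a)~the $t+j-1$ observed items form a uniformly random $(t+j-1)$-subset of $\{v_2,\ldots,v_{2n}\}$; (b)~the split between the $t$ Phase~1 slots and the $j-1$ Phase~2 slots is uniform within the chosen subset; and (c)~within the Phase~2 slots, the $w(t)$ items inside the observation window form a uniform sub-selection. The subtlety is that $\{T=t\}$ is a rank-based event: it pins down the ordering pattern on Phase~1 without constraining item identities, so that relabelling arguments give each of (a)--(c). Combining them, the maximum of the observed items lies in the accepting block of size $j-1-w(t)$ with probability $(j-1-w(t))/(t+j-1)$, hence
\[
    \PP\!\left(\text{accept } v_1 \,\middle|\, T=t,\,\sigma^{-1}(1)=n+j\right) = \1[j>w(t)] \cdot \frac{t+w(t)}{t+j-1}.
\]

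The remainder is calculus. The point mass $\PP(T=n) \to x$ from \Cref{Result: Distribution of stopping time}, combined with $w(n)=0$ and $\sum_{j=1}^n n/(n+j-1) \to n\ln 2$, contributes $\tfrac{x}{2}\ln 2$, i.e.\ the final summand. The absolutely continuous piece $\PP(T=t) \approx xn/t^2$ produces, after computing the inner $j$-sum (asymptotically $(t+w(t))\ln\tfrac{t+n}{t+w(t)}$) and substituting $u = t/n$, a Riemann sum converging to $\tfrac{x}{2}\int_x^1 \tfrac{u+\omega(u)}{u^2}\ln\tfrac{1+u}{u+\omega(u)}\,du$ with $\omega(u) \defas \max\{0,(1+u)/e - u\}$. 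Splitting at $u = (e-1)^{-1}$, where $\omega$ transitions to zero: for $u < (e-1)^{-1}$ one has $u+\omega(u) = (1+u)/e$ and $(1+u)/(u+\omega(u)) = e$, so the integrand collapses to $(1+u)/(eu^2)$; for $u \ge (e-1)^{-1}$ one has $\omega(u) = 0$ and the integrand becomes $\tfrac{1}{u}\ln(1+1/u)$. These are precisely the two integrals appearing in the claim, completing the proof.
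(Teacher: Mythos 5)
Your proposal is correct and follows essentially the same route as the paper's proof: split on which phase contains $v_1$, handle Phase~1 via the Secretary acceptance probability, condition on $T[x]$ for Phase~2, and pass to a Riemann integral that is split at $u=(e-1)^{-1}$, with the $T[x]=n$ atom supplying the $\tfrac{x}{2}\ln 2$ term. The only difference is cosmetic: you derive the conditional acceptance probability $(t+w(t))/(t+j-1)$ from first principles (with a welcome explicit justification of why conditioning on the rank-based event $\{T[x]=t\}$ preserves the uniformity needed for the counting argument), whereas the paper identifies Phase~2 with a Secretary strategy on $n+t$ items and invokes its known acceptance probability.
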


\iftoggle{SODA}
{
    \begin{proof}[Sketch proof]
        We partition on the cases where $\sigma$ assigns $v_1$ to Phase~1 or Phase~2. Then, the algorithm $\WAI[x]_n$ acts similarly to a Secretary strategy during Phase~1. Therefore, we can deduce the first term $- x \ln(x) / 2$. For Phase~2, the analysis is more involved. By conditioning on the number of items revealed during Phase~1, we can reason about the behaviour of $\WAI[x]_n$ during Phase~2. In particular, if the DM observed $t \in [1 \until n]$ items, then the behaviour of $\WAI[x]_n$ is similar to some Secretary strategy. The exact Secretary strategy depends on the value of $t$. Replacing the correct expression for each case, summing up over all values of $t$ and taking the limit as $n$ goes to $\infty$ we deduce the second term involving integral. Note that the last term $x \ln(2) /2$ corresponds to the case in which $\sigma$ assigned $v_1$ to Phase~2, the DM did not accept any item during Phase~1 and, applying the corresponding Secretary strategy with $n$ samples, the DM accepted $v_1$. 
    \end{proof}
}{
    \begin{proof}
        Fix an arbitrary instance $\bm{v} = (v_1, v_2, \ldots, v_{2n})$ of size $2n$. Then, by partitioning on the events that $\sigma$ assigns $v_1$ to Phase 1 or Phase 2, we have that
        \begin{align*}
        	\PP(\WAI[x]_{n} &\text{ accepts } v_1 ) \\
                &= \PP(\WAI[x]_{n} \text{ accepts } v_1, \sigma^{-1}(1) \le n) 
                    + \PP(\WAI[x]_{n} \text{ accepts } v_1, \sigma^{-1}(1) > n)  \,.
        \end{align*}
    	
        By definition of $\WAI[x]_n$, during Phase~1, the algorithm observes the first $\lceil xn \rceil$ items and then accepts the first record that appears. Conditioning on $\sigma$ assigning $v_1$ to Phase~1, i.e. $\sigma^{-1}(1) \le n$, we have that $\WAI[x]_n$ accepts $v_1$ with the same probability that certain Secretary algorithm would accept an item with maximum value. To be more precise, consider a new instance $\bm{v}' = (v_1', v_2', \ldots, v_{n}')$ of size $n$. Then, the strategy $\SEC[x]_{n}$ accepts $v_1'$ with the same probability that $\WAI[x]_n$ accepts $v_1$, conditined on $v_1$ being assigned to Phase~1. Formally,
        \[
            \PP(\WAI[x]_{n} \text{ accepts } v_1 \mid \sigma^{-1}(1) \le n) = \PP(\SEC[x]_{n} \text{ accepts } v_1' ) \,.
        \]
        Therefore, by \Cref{Result: Acceptance probability of Secretary algorithms}, we have that
        \begin{align*}
            \PP(\WAI[x]_{n} \text{ accepts } v_1, \sigma^{-1}(1) \le n)
                = \frac{1}{2} \PP(\WAI[x]_{n} \text{ accepts } v_1 \mid \sigma^{-1}(1) \le n)
                \xrightarrow[n \to \infty]{} - \frac{x}{2} \ln(x) \,.
        \end{align*}
    
        The computation of the limit of $\PP(\WAI[x]_{n} \text{ accepts } v_1, \sigma^{-1}(1) > n)$ is more involved and proceeds as follows. After fixing the number of items revealed during Phase~1 and the position in which $v_1$ might appear, we can use the definition of $\WAI[x]_{n}$ in Phase 2 to interpret its behaviour as an optimal Secretary strategy in a particular instance. Then, by \Cref{Result: Acceptance probability of Secretary algorithms}, we know the asymptotic expression of the corresponding (conditional) probability of accepting $v_1$. Finally, summing up all possibilities and taking the limit as $n$ grows, we will conclude the desired expression.
    
        Denote the (random) total number of items revealed during Phase~1 by $T[x]$. Then, 
        \begin{align}
        \label{Eq: Total probability}
            \begin{aligned}
                \PP(\WAI[x]_{n} &\text{ accepts } v_1, \sigma^{-1}(1) > n) \\
                    &= \PP(\WAI[x]_{n} \text{ accepts } v_1 \text{ during Phase 2}) \\
                    &= \sum_{t = \lceil xn \rceil + 1}^{n} \PP(\WAI[x]_{n} \text{ accepts } v_1 \text{ during Phase 2}, T[x] = t) \,.
            \end{aligned}
        \end{align}

        Fix $t \in [ \lceil xn \rceil + 1 \until n]$. Then, for $\WAI[x]_{n}$ to accept $v_1$ during Phase~2, it is needed that $\sigma$ assigns a position to $v_1$ during Phase 2. In particular, 
        \begin{align}
        \label{Eq: accepting while seeing i}
            \begin{aligned}
                \PP(\WAI[x]_{n} &\text{ accepts } v_1 \text{ during Phase 2}, T[x] = t) \\
                    &= \PP(\WAI[x]_{n} \text{ accepts } v_1 \text{ during Phase 2}, T[x] = t, \sigma^{-1}(1) \in [(n + 1) \until 2n]) \\
                    &= \PP(\WAI[x]_{n} \text{ accepts } v_1 \text{ during Phase 2}, T[x] = t, \sigma^{-1}(1) \in [1 \until i] \cup [(n + 1) \until 2n]) \,, 
        	\end{aligned}
        \end{align}    
        where the last equality only adds an event of probability zero.
    
        Recall that $\WAI[x]_{n}$, after observing during Phase~1 a total of $t$ items, during Phase~2 $\WAI[x]_{n}$ observes but does not accept items until $\lceil (n + t) / e \rceil$ items have been observed in total and then accepts any record that appears. In particular, it has the same behaviour as a Secretary strategy that, for an instance of size $(n + t)$, observes but does not accept the first $\max \{ t,  \lceil (n + t) / e \rceil\}$ items and then accepts any record that appears. Formally, for every $n$ and $t \in [1 \until n]$, consider a new instance $\bm{v}' = ( v_1', v_2', \ldots, v_{n + t}')$ of size $(n + t)$. Then, denote the event that the corresponding Secretary strategy accepts the item with the largest value in an instance of size $(n + t)$ by $E[t, n + t]$. Formally,
        \[
            E[t, n + t] \defas \SEC \left[ \frac{\max \{ t,  \lceil (n + t) / e \rceil\} }{ n + t } \right]_{(n + t)} \text{ accepts } v_1' \,.
        \]
        Finally, by the previous argument, we have the following equality.
        \begin{align}
        \label{Eq: conditional probability}
            \begin{aligned}
                \PP(\WAI[x]_{n} &\text{ accepts } v_1 \text{ during Phase 2} \mid T[x] = t, \sigma^{-1}(1) \in [1 \until i] \cup [(n + 1) \until 2n]) \\
                    &= \PP(E[t, n + t]) \,.
            \end{aligned}            
        \end{align}
        
        Note that, since $\WAI[x]_{n}$ is value-oblivious, the (random) number of items observed during Phase~1, denoted $T[x]$, is independent of the position of $v_1$. Formally, 
        \begin{align}
        \label{Eq: probability}
            \begin{aligned}
                \PP( T[x] = t, \sigma^{-1}(1) \in [1 \until t] \cup [(n + 1) \until 2n] ) 
                    &= \PP( T[x] = t ) \cdot \PP( \sigma^{-1}(1) \in [1 \until t] \cup [(n + 1) \until 2n] ) \\
                    &= \left( \frac{1}{t} \frac{\lceil xn \rceil}{t - 1} \1[t > \lceil xn \rceil] + \frac{\lceil xn \rceil}{n} \1[t = n]\right) 
                        \cdot \left( \frac{ n + t }{ 2n } \right) \,,
            \end{aligned}
        \end{align}
        where the last equality follows from \Cref{Result: Distribution of stopping time}.
        
        Therefore, applying~\eqref{Eq: conditional probability}~and~\eqref{Eq: probability} to \eqref{Eq: accepting while seeing i}, we have that
        \begin{align}
        \label{Eq: accepting while seeing i, replaced}
            \begin{aligned}
                \PP(\WAI[x]_{n} &\text{ accepts } v_1 \text{ during Phase 2}, T[x] = t) \\
                    &= \PP(E[t, n + t]) \cdot \left( \frac{1}{t} \frac{\lceil xn \rceil}{t - 1} \1[t > \lceil xn \rceil] + \frac{\lceil xn \rceil}{n} \1[t = n]\right) 
                        \cdot \left( \frac{ n + t }{ 2n } \right) \,.
            \end{aligned}
        \end{align}
    
        Note that, fixing $t/n = u \in [0, 1]$, by \Cref{Result: Acceptance probability of Secretary algorithms}, we have that 
        \begin{align}
        \label{Eq: limit acceptance of secretary}
            \begin{aligned}
                \lim_{n \to \infty} \PP(E[t, n + t])  
                    &= -\left( \frac{u}{1 + u} \right) \ln \left( \frac{u}{1 + u} \right) \1[ue  > (1 + u)]
                        -\left( \frac{1}{e} \right) \ln \left( \frac{1}{e} \right) \1[ue  \le (1 + u)] \\
                    &= -\left( \frac{u}{1 + u} \right) \ln \left( \frac{u}{1 + u} \right) \1[ue  > (1 + u)]
                        + \left( \frac{1}{e} \right) \1[ue  \le (1 + u)] \,.
            \end{aligned}
        \end{align}
    
        Finally, replacing~\eqref{Eq: accepting while seeing i, replaced} back to~\eqref{Eq: Total probability}, we obtain that
        \begin{align*}
            \PP(\WAI[x]_{n} &\text{ accepts } v_1, \sigma^{-1}(1) > n) \\
                &= \sum_{t = \lceil xn \rceil + 1}^n \PP(E[t, n + t]) 
                    \left( \frac{1}{t} \frac{\lceil xn \rceil}{t - 1} \1[i > \lceil xn \rceil] + \frac{\lceil xn \rceil}{n} \1[t = n]\right) 
                    \left( \frac{ n + t }{ 2n } \right) \\
                &= \sum_{t = \lceil xn \rceil + 1}^{n-1} \PP(E[t, n + t]) 
                    \left( \frac{1}{t} \frac{\lceil xn \rceil}{t - 1} \right) 
                    \left( \frac{ n + t }{ 2n } \right) \\
                &\quad\quad+ \PP(E[n, 2n])  \frac{\lceil xn \rceil}{n} \\
                &= \sum_{t = \lceil xn \rceil + 1}^{n-1} \PP(E[t, n + t]) 
                    \left( \frac{1}{t} \frac{\lceil xn \rceil}{t - 1} \right) 
                    \left( \frac{ n + t }{ 2n } \right) \\
                &\quad\quad+ \PP(E[n, 2n])  \frac{\lceil xn \rceil}{n} \,.
        \end{align*}
        Therefore, taking the limit as $n$ grows and using~\eqref{Eq: limit acceptance of secretary}, we have that
        \begin{align*}
            \lim_{n \to \infty} &\PP(\WAI[x]_{n} \text{ accepts } v_1, \sigma^{-1}(1) > n) \\
                &= \int_{x}^{1} 
                    \left( -\left( \frac{u}{1 + u} \right) \ln \left( \frac{u}{1 + u} \right) \1[ue  > (1 + u)]
                        + \left( \frac{1}{e} \right) \1[ue  \le (1 + u)] \right)
                    \left( \frac{x}{u^2} \right) 
                    \left( \frac{ 1 + u }{ 2 } \right) du \\
                &\quad\quad- \frac{1}{2} \ln \left( \frac{1}{2} \right) x \\
                &= \int_{x}^{1} 
                    \left( \frac{x}{2u} \right) \ln \left( \frac{1 + u}{u} \right) \1[ue  > (1 + u)]
                        + \left( \frac{x (1 + u)}{2 e u^2} \right) \1[ue  \le (1 + u)] du + \frac{x}{2} \ln(2) \\
                &= \frac{x}{2} \int_{ \min\{ x, (1 - e)^{-1} \} }^{ (1 - e)^{-1} } \frac{u + 1}{e u^2} du
                    + \frac{x}{2} \int_{ \max\{ x, (1 - e)^{-1} \} }^{ 1 } 
                    \frac{ 1 }{ u } \ln \left( 1 + \frac{1}{u} \right) du + \frac{x}{2}  \ln(2) \,.
        \end{align*}
        
        We conclude that, for all $x \in [0, 1]$, we have
        \begin{align*}
        	\lim_{n \to \infty} &\PP(\WAI[x]_{n} \text{ accepts } v_1 ) \\
                &= \lim_{n \to \infty} \PP(\WAI[x]_{n} \text{ accepts } v_1, \sigma^{-1}(1) \le n) 
                    + \PP(\WAI[x]_{n} \text{ accepts } v_1, \sigma^{-1}(1) > n) \\ 
                &= - \frac{x}{2} \ln(x) + \frac{x}{2} \left( \int_{ \min(x, (e - 1)^{-1}) }^{(e - 1)^{-1}} \frac{ u + 1 }{ e u^2 } du 
                    + \int_{ \max(x, (e - 1)^{-1}) }^{1} \frac{1}{u} \ln \left( 1 + \frac{1}{u} \right) du \right) + \frac{x}{2} \ln(2) \,.
        \end{align*}
    \end{proof}
}

We proceed to prove \Cref{Result: Algorithms upperbound} which states that the limit performance of any algorithm for the Repeated Prophet Inequality is at most $0.502$. 

\iftoggle{SODA}
{
    \begin{proof}[Sketch proof of \Cref{Result: Algorithms upperbound}]
        Applying \Cref{Result: Choosing the maximum upperbound}, \Cref{Result: Characterization of the optimal algorithm} and \Cref{Result: Limit probability of algorithms}, the performance of any algorithm can be upper bounded by $\max_{x \in [0, 1]} f(x)$, where $f(x)$ is the limit acceptance probability of $v_1$ for $\sWAI[x]$, given in \Cref{Result: Limit probability of algorithms}. Since $f$ is striclty concave, standard optimization techniques can be applied to deduce the numerical bound of $0.502$ at $x^* \approx 0.463$.
    \end{proof}
}{
    \begin{proof}[Proof of \Cref{Result: Algorithms upperbound}]
        Consider any algorithm $\sALG$ for the Repeated Secretary Problem. By \Cref{Result: Choosing the maximum upperbound}, we have that
        \[
            \perf(\sALG) \le \liminf_{n \to \infty} \PP( \ALG_n \text{ accepts } v_1 ) \,.
        \]
        Then, applying \Cref{Result: Characterization of the optimal algorithm}, we moreover have that
        \[
            \perf(\sALG) \le \max_{x \in [0, 1]} \liminf_{n \to \infty} \PP( \WAI[x]_n \text{ accepts } v_1 ) \,.
        \]
        Finally, by \Cref{Result: Limit probability of algorithms}, we have that
        \begin{align*}
            &\perf(\sALG) \\
                &\le \sup_{x \in [0, 1]} 
                    - \frac{x}{2} \ln(x) + \frac{x}{2} \left( \int_{ \min(x, (e - 1)^{-1}) }^{(e - 1)^{-1}} \frac{ u + 1 }{ e u^2 } du 
                    + \int_{ \max(x, (e - 1)^{-1}) }^{1} \frac{1}{u} \ln \left( 1 + \frac{1}{u} \right) du \right) + \frac{x}{2} \ln(2) \\
                &\safed \sup_{x \in [0, 1]} f(x) \,.
        \end{align*}
        Therefore, all we need to show is that
        \[
            \sup_{x \in [0, 1]} f(x) \le 0.502 \,.
        \]
        We prove that $f$ is strictly concave. Therefore, standard optimization techniques apply~\cite{bertsekas2015ConvexOptimizationAlgorithms}.
    
        Define the function $g \colon [0, 1] \to \RR$ by
        \[
            g(x) \defas x\left( \int_{ \min(x, (e - 1)^{-1}) }^{(e - 1)^{-1}} \frac{ u + 1 }{ e u^2 } du 
                    + \int_{ \max(x, (e - 1)^{-1}) }^{1} \frac{1}{u} \ln \left( 1 + \frac{1}{u} \right) du \right) \,.
        \]
        Then, $f(x) = (- x \ln(x) + g(x) + x \ln(2)) / 2$, i.e. the sum of a strictly concave function and $g$. Therefore, if $g$ is concave, then $f$ is strictly concave. So all we need to do is to show that $g$ is concave. Note that
        \[
            g(x) = x \left( \int_{ x }^{ 1 } \frac{ u + 1 }{ e u^2 } \1[ue  \le (1 + u)] + \frac{1}{u} \ln \left( 1 + \frac{1}{u} \right) \1[ue  > (1 + u)] du \right)
        \]
        In particular, $g$ is the integral of a continuous function. Therefore, $g$ is differentiable and its derivative $g'$ is a continuous function. We show that $g$ is concave by proving that $g'$ is decreasing.
    
        For $x \in (0, (e - 1)^{-1})$, we have that
        \begin{align*}
            g(x) &= x \int_{ x }^{ (e - 1)^{-1} } \frac{ u + 1 }{ e u^2 } du + x \int_{(e - 1)^{-1}}^{1} \frac{1}{u} \ln \left( 1 + \frac{1}{u} \right) du \,, \\
            g'(x) &= \int_{ x }^{ (e - 1)^{-1} } \frac{ u + 1 }{ e u^2 } du - \frac{ x + 1 }{ e x } + \int_{(e - 1)^{-1}}^{1} \frac{1}{u} \ln \left( 1 + \frac{1}{u} \right) du \,, \\
            g''(x) &= - \frac{ x + 1 }{ e x^2 } - \frac{x - (x + 1)}{ex^2} = - \frac{1}{ex} < 0 \,.
        \end{align*}    
        Therefore, $g'$ is decreasing in $(0, (e - 1)^{-1})$.
    
        For $x \in ((e - 1)^{-1}, 1)$, we have that
        \begin{align*}
            g(x) &= x \int_{x}^{1} \frac{1}{u} \ln \left( 1 + \frac{1}{u} \right) du \,, \\
            g'(x) &= \int_{x}^{1} \frac{1}{u} \ln \left( 1 + \frac{1}{u} \right) du - \ln \left( 1 + \frac{1}{x} \right) \,, \\
            g''(x) &= - \frac{1}{x} \ln \left( 1 + \frac{1}{x} \right) + \frac{1}{x (x + 1) } = \frac{1}{x} \left( \frac{1}{x + 1} - \ln \left( 1 + \frac{1}{x} \right) \right) < 0 \,.
        \end{align*}
        Therefore, $g'$ is decreasing in $((e - 1)^{-1}, 1)$.
        
        Since $g'$ is continuos, we deduce that $g'$ is strictly decreasing in $[0, 1]$. Therefore $g$ is concave and so is $f$. Finally, we can apply any standard optimization algorithm to deduce the bound of $0.502$ at $x^* \approx 0.463$. For example, we can use binary search to $f'$ over $[0, 1]$ to find the unique maximum, as it readily gives error guarantees.
    \end{proof}
}

\begin{Remark}
    As mentioned before, we obtained an optimal algorithm for the Repeated Secretary Problem. Indeed, we proved that there is $x^* \in [0, 1]$ such that $\sWAI[x^*]$ is optimal and showed that $x^* \approx 0.463$, the maximizer of the function in \Cref{Result: Limit probability of algorithms}
\end{Remark}

\iftoggle{SODA}
{
    \vspace{-1em}
    \section{Convex Combination of Phases}
    \label{Section: Convex Combination of Phases}
    \vspace{-0.5em}
}{
    \section{Convex Combination of Phases}
    \label{Section: Convex Combination of Phases}
}
    
In this section, we generalize our results to the case where the goal of the DM is not simply the sum of the values chosen in each phase, but a convex combination of the ratio obtained in each phase. Previously, we have only considered a very particular convex combination, the average. More generally, define for each $\lambda \in [0, 1]$, the limit $\lambda$-performance by 
\[
    \perf_\lambda(\sALG) 
        \defas \liminf_{n \to \infty} \inf_{\bm{v}} 
            (1 - \lambda) \frac{ \EE(\ALG_n^{(1)}({\bm{v}})) }{ \EE \left(\max \left\{ v_{\sigma(i)} : i \in [1 \until n] \right\} \right) } + \lambda \frac{ \EE(\ALG_n^{(2)}({\bm{v}})) }{ \EE \left(\max \left\{ v_{\sigma(i)} : i \in [(n + 1) \until 2n] \right\} \right) } \,.
\]
The value of interest is $\perf_\lambda$, the supremum over all algorithms. We have only considered $\lambda = 1/2$.

It is easy to generalize 
\Cref{Result: Algorithm lowerbound} and \Cref{Result: Algorithms upperbound} for any $\lambda \in [0, 1]$. Indeed, we only need to generalize \Cref{Result: Choosing the maximum upperbound}, \Cref{Result: Characterization of the optimal algorithm} and \Cref{Result: Limit probability of algorithms}. All arguments within the proofs apply with no change.

\iftoggle{SODA}
{
    
}{
    For lower bounds, we generalize \Cref{Result: Algorithm lowerbound} by proving the generalization of \Cref{Result: Limit performance of RPI[x]} to the new goal for the DM. In other words, we prove the following result.
    \begin{lemma}[{Convexified limit performance of $\sRPI[x]$}]
        For all $x \in [0, 1]$, we have that
        \[
        \perf_\lambda(\sRPI[x]) 
            = - (1 - \lambda) x \ln(x) + \lambda x \left( \int_{x}^{1} \alpha \left( \frac{u}{1 + u} \right) \frac{1}{u^2} du + \alpha \left( \frac{1}{2} \right) \right) \,.
        \]
    \end{lemma}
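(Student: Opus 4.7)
The plan is to mimic the proof of \Cref{Result: Limit performance of RPI[x]} (the unweighted case, $\lambda = 1/2$) while carrying the weights $(1-\lambda)$ and $\lambda$ through the two resulting terms. By the definition of $\perf_\lambda$, the expression splits naturally into a Phase~1 contribution weighted by $(1-\lambda)$ and a Phase~2 contribution weighted by $\lambda$, and these two ratios can be analyzed independently since they share the same denominator structure.

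For Phase~1, I would observe that $\RPI[x]_n$ restricted to the first $n$ items acts exactly as the Secretary algorithm $\SEC[x]_n$. By \Cref{Result: Acceptance probability of Secretary algorithms}, its probability of accepting the largest item among the first $n$ converges to $-x\ln(x)$. Since $\RPI[x]_n$ is value-oblivious, this acceptance probability coincides with the worst-case asymptotic ratio $\EE(\RPI[x]_n^{(1)}(\bm{v}))/\EE(\max\{v_{\sigma(i)}:i\in[1\until n]\})$ (tightness being realized by the adversarial instance $(1,0,\ldots,0)$). This contributes $-(1-\lambda)x\ln(x)$ to the $\lambda$-performance.

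For Phase~2, I would condition on the number $T[x]$ of items observed during Phase~1, whose distribution is given by \Cref{Result: Distribution of stopping time}. Conditioned on $T[x]=t$, the behaviour of $\RPI[x]_n$ during Phase~2 coincides with the optimal strategy of Correa et~al.\ for the Sample-driven Prophet Inequality with $n$ online items and $t$ previously revealed items, corresponding to sampling rate $p = t/(n+t)$. Hence the conditional ratio converges to $\alpha(t/(n+t))$. Setting $u = t/n$ and letting $n \to \infty$, the sum over $t \in \{\lceil xn\rceil+1,\ldots,n-1\}$, weighted by $\lceil xn\rceil/(t(t-1))$, converges to the Riemann integral $x\int_x^1 \alpha(u/(1+u))\,u^{-2}\,du$, while the atom $\PP(T[x]=n)\to x$ contributes an isolated $\alpha(1/2)\,x$ term. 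Multiplying by $\lambda$ yields the second term in the stated formula.

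The main obstacle is the Phase~2 analysis: verifying rigorously that the conditional behaviour of $\RPI[x]_n$ reduces to the Sample-driven Prophet Inequality setting (which requires checking that conditioning on $T[x]=t$ preserves uniformity of the ordering on the remaining items), and justifying the passage from the Riemann sum to the integral uniformly in the adversarial instance $\bm{v}$. Both points are already resolved in the proof of \Cref{Result: Limit performance of RPI[x]}, and since neither argument uses the specific value $1/2$ of the weights, the same reasoning applies verbatim with $(1-\lambda)$ and $\lambda$ substituted in, giving the claimed identity.
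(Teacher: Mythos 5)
Your proposal is correct and follows exactly the route the paper intends: the paper gives no separate proof for this lemma, stating only that the argument of \Cref{Result: Limit performance of RPI[x]} carries over unchanged, and your write-up is precisely that argument with the weights $(1-\lambda)$ and $\lambda$ substituted for the two factors of $1/2$. The Phase~1 Secretary analysis, the conditioning on $T[x]$ with the Riemann-sum limit and the $\alpha(1/2)$ atom at $t=n$, and the tightness via the instance $(1,0,\ldots,0)$ all match the paper's treatment.
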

    
    For upper bound, we first generalize \Cref{Result: Choosing the maximum upperbound} to the following.
    \begin{lemma}[Convexified choosing the maximum upperbound]
    \label{Result: Convexified choosing the maximum upperbound}
    	Consider the Repeated Prophet Inequality, with known or unknown values. Then, the limit $\lambda$-performance of any algorithm is bounded by a convex combination of accepting the maximum value in each phase. Formally, for any algorithm $\sALG$,
    	\begin{align*}
           	\perf_\lambda(\sALG) 
                \le \liminf_{n \to \infty} &(1 - \lambda) \PP(\ALG_n \text{ accepts the maximum value during Phase~1}) \\
                &+ \lambda \PP(\ALG_n \text{ accepts the maximum value during Phase~2}) \,.
        \end{align*}
    \end{lemma}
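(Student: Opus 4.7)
The plan is to adapt the proof of \Cref{Result: Choosing the maximum upperbound}, again specializing to the single bad instance $\bm{v}=(1,0,\ldots,0)\in\RR^{2n}$, but now decomposing the ratio phase by phase rather than in aggregate. For each $n$, I would work with this instance and take advantage of the fact that only $v_1$ contributes anything to any expectation: the DM's expected gain in a phase is simply the probability of grabbing $v_1$ in that phase, and the expected maximum of a phase is just the probability that $\sigma$ places $v_1$ in it.

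Concretely, the first step is to compute $\EE(\max\{v_{\sigma(i)}:i\in[1\until n]\})=\PP(v_1 \text{ lies in Phase 1})=1/2$, and symmetrically $\EE(\max\{v_{\sigma(i)}:i\in[(n+1)\until 2n]\})=1/2$. The second step is to observe that, since every non-maximum value is zero, $\EE(\ALG_n^{(i)}(\bm{v}))=\PP(\ALG_n \text{ accepts } v_1 \text{ in Phase } i)$, which factors as $\PP(v_1\in\text{Phase } i)\cdot \PP(\ALG_n \text{ accepts } v_1 \mid v_1\in\text{Phase } i) = \tfrac{1}{2}\PP(\ALG_n \text{ accepts } v_1 \mid v_1\in\text{Phase } i)$. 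Substituting numerator and denominator into the definition of $\perf_\lambda$, the factors of $1/2$ cancel, and the ratio for this instance collapses to
\[
(1-\lambda)\PP(\ALG_n \text{ accepts } v_1 \mid v_1\in\text{Phase 1}) + \lambda\PP(\ALG_n \text{ accepts } v_1 \mid v_1\in\text{Phase 2}),
\]
which is exactly $(1-\lambda)\PP(\ALG_n\text{ accepts the maximum value during Phase 1})+\lambda\PP(\ALG_n\text{ accepts the maximum value during Phase 2})$ under the natural reading of that notation.

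Since $\perf_\lambda(\sALG)$ is defined as a $\liminf_n$ of an infimum over instances, applying the bound to the sequence of instances $\bm{v}_n=(1,0,\ldots,0)$ and passing to the $\liminf$ finishes the argument. The only mildly delicate point --- and what I would be most careful about --- is pinning down the reading of ``$\PP(\ALG_n\text{ accepts the maximum value during Phase } i)$''; the correct interpretation (the one that makes the algebra above tight and that reduces, for $\lambda=1/2$, to the bound in \Cref{Result: Choosing the maximum upperbound} via the identity $\PP(\text{accept max})=\tfrac{1}{2}\PP(\text{accept max}\mid v_1\in\text{P1})+\tfrac{1}{2}\PP(\text{accept max}\mid v_1\in\text{P2})$) is precisely the conditional probability that $\ALG_n$ accepts $v_1$ given that $\sigma$ has placed $v_1$ into Phase $i$. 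With that convention the rest of the proof is mechanical, and no new ideas beyond those of \Cref{Result: Choosing the maximum upperbound} are required.
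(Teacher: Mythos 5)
Your proposal is correct and matches the paper's intent exactly: the paper gives no standalone proof of this lemma, stating only that the arguments of \Cref{Result: Choosing the maximum upperbound} (the instance $\bm{v}=(1,0,\ldots,0)$, on which the ratio collapses to acceptance probabilities) apply without change, which is precisely what you carry out. Your resolution of the notational ambiguity is also the right one — the phase-$i$ probability must be read conditionally on $v_1$ landing in Phase~$i$, as confirmed by the paper's later use of the lemma where the Phase~1 term for $\WAI[x]$ is $-(1-\lambda)x\ln(x)$ rather than $-(1-\lambda)\tfrac{x}{2}\ln(x)$.
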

    
    Finally, the Waiting algorithms defined in \Cref{Section: Upper bound} for the Repeated Secretary Problem still contain the optimal algorithm, i.e. \Cref{Result: Characterization of the optimal algorithm} readily generalizes. Also, \Cref{Result: Limit probability of algorithms} extends and the formula of the limit convexified probability of choosing $v_1$ for the algorithm $\WAI[x]$ is given by the following result.
    \begin{lemma}[Convexified limit probability of algorithms]
    \label{Result: Convexified limit probability of algorithms}
        For all $x \in [0, 1]$, we have that
        \begin{align*}
            \lim_{n \to \infty} &(1 - \lambda) \PP( \WAI[x]_{n} \text{ accepts } v_1 \text{ during Phase~1}) 
                + \lambda \PP( \WAI[x]_{n} \text{ accepts } v_1 \text{ during Phase~2}) \\ 
                &= - (1 - \lambda) x \ln(x) + \lambda x \left( \int_{ \min(x, (e - 1)^{-1}) }^{(e - 1)^{-1}} \frac{ u + 1 }{ e u^2 } du 
                    + \int_{ \max(x, (e - 1)^{-1}) }^{1} \frac{1}{u} \ln \left( 1 + \frac{1}{u} \right) du  + \ln(2) \right) \,.
        \end{align*}
    \end{lemma}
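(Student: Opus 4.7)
The plan is to follow the proof of \Cref{Result: Limit probability of algorithms} almost verbatim, replacing the natural weight of $1/2$ that arose there from $\PP(\sigma^{-1}(1) \le n) = \PP(\sigma^{-1}(1) > n) = 1/2$ by the prescribed weights $(1-\lambda)$ and $\lambda$. Under the natural reading of the statement, $\PP(\WAI[x]_n \text{ accepts } v_1 \text{ during Phase~} i)$ denotes the probability conditional on $v_1$ being assigned to Phase~$i$, i.e.\ exactly twice the corresponding joint probability in the proof of \Cref{Result: Limit probability of algorithms}. The left-hand side then decomposes as
\[
    (1-\lambda)\, \PP(\WAI[x]_n \text{ accepts } v_1 \mid \sigma^{-1}(1) \le n) + \lambda\, \PP(\WAI[x]_n \text{ accepts } v_1 \mid \sigma^{-1}(1) > n) \,,
\]
and I will compute each conditional limit separately.

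For the Phase~1 conditional, I would note that conditioning on $\sigma^{-1}(1) \le n$ makes the Phase~1 prefix of $\WAI[x]_n$ behave exactly as the Secretary algorithm $\SEC[x]_n$ on an $n$-item instance whose maximum is $v_1$. By \Cref{Result: Acceptance probability of Secretary algorithms} the conditional limit equals $-x\ln(x)$, producing the summand $-(1-\lambda)\,x\ln(x)$ directly.

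For the Phase~2 conditional, I would further condition on the number $T[x]$ of items observed during Phase~1. Value-obliviousness makes $T[x]$ independent of $\sigma^{-1}(1)$, so its conditional law given $\sigma^{-1}(1) > n$ coincides with the unconditional law from \Cref{Result: Distribution of stopping time}. Given $T[x] = t$, the Phase~2 behavior of $\WAI[x]_n$ is precisely the Secretary strategy on an instance of size $n+t$ with observation period $\max\{t, \lceil (n+t)/e \rceil\}$ whose maximum is $v_1$, and \Cref{Result: Acceptance probability of Secretary algorithms} delivers the corresponding limit: with $u = t/n$, it equals $\frac{u}{1+u} \ln\left(1 + \tfrac{1}{u}\right)$ when $u > (e-1)^{-1}$ and $1/e$ when $u \le (e-1)^{-1}$. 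Summing against the asymptotic density $x/u^2$ of $T[x]/n$ on $[x,1)$ converts the sum into a Riemann integral that splits at $u = (e-1)^{-1}$ into the two integrals appearing in the statement; the atom $\PP(T[x] = n) \to x$ from \Cref{Result: Distribution of stopping time} contributes $-x\ln(1/2) = x \ln 2$ (via the Secretary strategy $\SEC[1/2]_{2n}$, since $\max\{n, \lceil 2n/e\rceil\} = n$ for large $n$). Multiplying by $\lambda$ yields the second summand.

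The main technical obstacle, just as in the proof of \Cref{Result: Limit probability of algorithms}, is locating the piecewise boundary $u = (e-1)^{-1}$ (the root of $ue = 1 + u$) where the active observation threshold in Phase~2 switches between $t$ and $\lceil (n+t)/e \rceil$, and carefully extracting the atom of $T[x]$ at $t = n$ to obtain the $\lambda x \ln 2$ term. Since every other step is identical to the proof of \Cref{Result: Limit probability of algorithms} with the $1/2$ prefactors replaced by $(1-\lambda)$ and $\lambda$, no additional difficulty arises and the claimed identity follows.
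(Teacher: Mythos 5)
Your proposal is correct and follows exactly the route the paper intends: the paper gives no separate proof of this lemma, stating only that the proof of \Cref{Result: Limit probability of algorithms} ``applies with no change'' once the two $1/2$ prefactors are replaced by $(1-\lambda)$ and $\lambda$, and you additionally make explicit the normalization the paper leaves implicit (the probabilities in the statement must be read conditionally on which phase $v_1$ lands in, i.e.\ as twice the joint probabilities computed in \Cref{Result: Limit probability of algorithms}, which is what removes the $1/2$'s). One bookkeeping step in your Phase~2 computation is stated imprecisely: the quantities $\frac{u}{1+u}\ln(1+\frac{1}{u})$ and $\frac{1}{e}$ you quote are the limits of $\PP(E[t,n+t])$ with $v_1$ uniformly placed among all $n+t$ observed items, so summing them directly against the density $x/u^2$ does \emph{not} yield the two integrals in the statement; you must first multiply by $\frac{n+t}{n}=1+u$ to re-condition $v_1$'s position onto the $n$ Phase~2 slots (this is the factor $\frac{n+t}{2n}$ in the paper's Equation~\eqref{Eq: probability}, divided by $\PP(\sigma^{-1}(1)>n)=\frac{1}{2}$). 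Your atom computation at $t=n$ already applies this correction implicitly (you obtain $\ln 2$ rather than $\frac{1}{2}\ln 2$), so restoring the same factor $(1+u)$ in the integrand makes the integral terms come out as claimed.
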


    Therefore, we obtain the optimal algorithm for the $\lambda$-Repeated Secretary Problem by simply obtimizing over $x \in [0, 1]$ in the expression given by \Cref{Result: Convexified limit probability of algorithms}.
}

We show the numerical results obtained for various values of $\lambda \in [0, 1]$ in \Cref{Figure: Convexified bounds}. Note that the difference between the upper bound and our lower bound is mostly increasing on $\lambda$. This is expected since the value-oblivious variant of Repeated Prophet Inequality is a harder problem compared to the Repeated Secretary Problem, in the sense that the Repeated Secretary Problem is recovered if we constrain the adversary to choose a particular instance. This is formally stated in \Cref{Result: Choosing the maximum upperbound}. There are particular values of $\lambda$ that deserve some attention.

\begin{figure}[ht]
    \centering
    \includegraphics[width=0.5 \linewidth, height= 0.3 \linewidth]{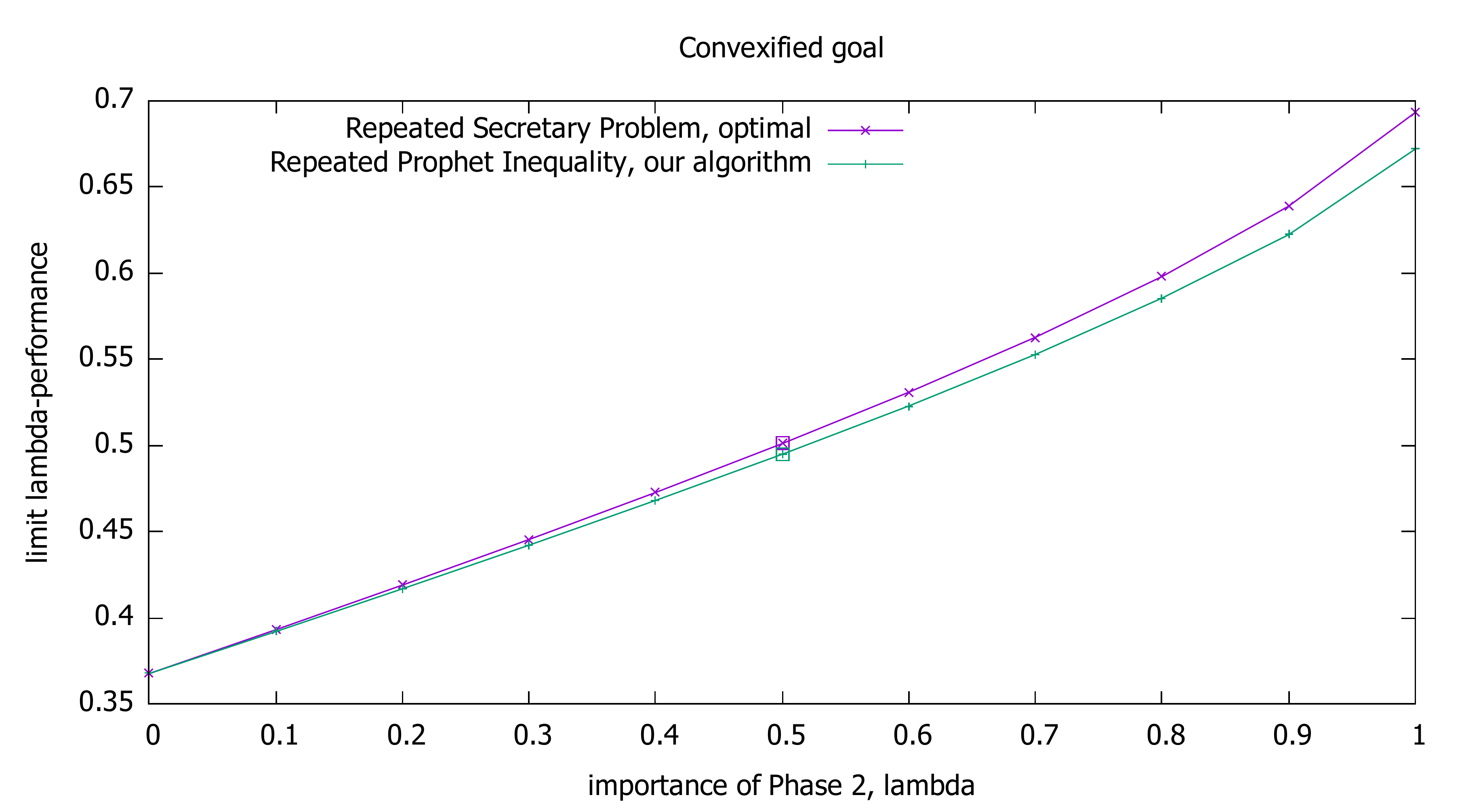}
    \caption{Lower and upper bounds for {$\perf_\lambda$}}
    \label{Figure: Convexified bounds}
\end{figure}

The case $\lambda = 0$ models the situation where the DM is indifferent about Phase~2, recovering the classical single-choice Prophet Inequality with no prior information. This problem is equivalent to the Secretary Problem~\cite{ferguson1989WhoSolvedSecretary} and the optimal performance is $1/e \approx 0.367$, due to~\cite{gilbert1966RecognizingMaximumSequence}. The case of $\lambda = 1$ models the situation where the DM is indifferent about Phase~1. In the case of the Repeated Prophet Inequality, we recover the Sample-driven Prophet Inequality where half of the items are sampled~\cite{correa2020SampledrivenOptimalStopping, kaplan2019CompetitiveAnalysisSample}. The optimal performance for this problem is approximately $0.671$, due to~\cite{correa2020SampledrivenOptimalStopping}. In the case of the Repeated Secretary Problem, we again recover the Secretary Problem where the first half of the items can not be choosen. By~\cite{gilbert1966RecognizingMaximumSequence}, the optimal performance for this model is $\ln(2) \approx 0.693$. In conclusion, our model recovers classical models and shows that we present a unifying approach.

\smallskip
\noindent{\bf Concluding remarks.} In this work we introduced the model of Repeated Prophet Inequality and present lower bound as an algorithm and upper bound results that show our algorithm is near-optimal. Interesting directions of future work include (a)~closing the optimality gap between the lower and upper bound; and (b)~generalization to Multi-phase Prophet Inequalities.

\iftoggle{SODA}
{
    \vspace{-1em}
    \section*{Acknowledgements} 
    \label{Section: Acknowledgements}
    \vspace{-0.5em}
}{
    \section*{Acknowledgements} 
    \label{Section: Acknowledgements}
}

This research was partially supported by the ERC CoG 863818 (ForM-SMArt) grant.

%

\bibliographystyle{alpha}
\bibliography{biblio}

%
%
%

\iftoggle{SODA}
{    
    \pagebreak
    \hspace{0pt}
    \vfill
    \begin{center}
        \Huge \textbf{Appendix}
    \end{center}
    \vfill
    \hspace{0pt}
    \thispagestyle{empty}
    \pagebreak

    \includepdf[pages=-]{main_full.pdf}
}

\end{document}